\let\proof\@undefined
\let\endproof\@undefined
\newtheorem{theorem}{Theorem}[section]
\newtheorem{corollary}[theorem]{Corollary}
\newtheorem{lemma}[theorem]{Lemma}
\theoremstyle{definition}
\newtheorem{definition}[theorem]{Definition}
\theoremstyle{remark}
\newtheorem{remark}[theorem]{Remark}
\newcommand{\Int}{\mathbb{Z}} \newcommand{\nat}{{\mathbb{N}}}
 \newcommand{\real}{{\mathbb{R}}}
\newcommand{\set}[1]{\ensuremath{\left\lbrace #1 \right\rbrace}}
\newcommand{\floor}[1]{\left\lfloor #1 \right\rfloor}
\DeclareMathOperator*{\argmax}{argmax}
\newcommand\oprocendsymbol{\hbox{$\bullet$}}
\newcommand\oprocend{\relax\ifmmode\else\unskip\hfill\fi\oprocendsymbol}
\title{The Maximum Traveling Salesman Problem with Submodular Rewards}
\author{Syed Talha Jawaid \qquad Stephen L. Smith\thanks{This research is partially
    supported by the Natural Sciences and Engineering Research Council
    of Canada (NSERC). }
  \thanks{The authors are with the Department of Electrical and
    Computer Engineering, University of Waterloo, Waterloo ON, N2L 3G1
    Canada  (\talha; \smith)}}
\begin{document}

\maketitle

\begin{abstract}
In this paper, we look at the problem of finding the tour of maximum reward on an 
undirected graph where the reward is a submodular function, that has a curvature
of $\kappa$, of the edges in the tour. This 
problem is known to be NP-hard.  We analyze two simple algorithms for finding an 
approximate solution.  Both algorithms require $O(|V|^3)$ oracle calls to the submodular 
function.  The approximation factors are shown to be $\frac{1}{2+\kappa}$ and
$\max\set{\frac{2}{3(2+\kappa)},\frac{2}{3}(1-\kappa)}$, respectively;
so the second method has better bounds for low values of $\kappa$.
We also look at how these algorithms perform for a directed graph 
and investigate a method to consider edge costs in addition to rewards.  The problem has 
direct applications in monitoring an environment using autonomous mobile sensors where the 
sensing reward depends on the path taken.  We provide simulation results to empirically 
evaluate the performance of the algorithms.
\end{abstract}

\section{Introduction}

The maximum weight Hamiltonian cycle is a classic problem in
combinatorial optimization. It consists of finding a cycle in a graph
that visits all the vertices and maximizes the sum of the weights
(i.e., the reward) on the edges traversed.  Also referred to as the
max-TSP, the problem is NP-hard and so no known polynomial time
algorithms exists to solve it. However, a number of approximation
schemes have been developed.  In~\cite{MLF-GLN-LAW:79} four simple
approximation algorithms are analysed.  The authors show that greedy,
best-neighbour, and 2-interchange heuristics all give a $\frac{1}{2}$
approximation to the optimal tour.  They also show that a matching
heuristic, which first finds a perfect 2-matching and then converts
that to a tour, gives a $\frac{2}{3}$ approximation.
In~\cite{RH-SR:98}, the authors point out that Serdyukov's
algorithm--- an algorithm which computes a tour using a combination of
a maximum cycle cover and a maximum matching---can give a
$\frac{3}{4}$ approximation.  They also give a randomized algorithm
that achieves a $\frac{25}{33}$ approximation ratio.  In this paper we
look at extending the max-TSP problem to the case of submodular
rewards.

The main property of a submodular function is that of decreasing
marginal value, i.e., choosing to add an element to a smaller set will
result is a larger reward than adding it later.  One application in
which submodular functions appear is in making sensor measurements in
an environment.  For example, in~\cite{CG-AK-AS:05} the authors
consider the problem of placing static sensors over a region for
optimal sensing.  If a sensor is placed close to another, then the
benefit gained by the second sensor will be less that if the first
sensor had not already been placed.  This can be represented
quantitatively by using the concept of mutual information of a set of
sensors, which is a submodular function.  Other areas where submodular
functions come up include viral marketing, active
learning~\cite{DG-AK:11} and AdWords assignment \cite{PRG-ASS:07}.  A
different form of sensing involves using mobile sensors for persistent
monitoring of a large environment using a mobile robot \cite{AS-AK-CG-WK:09}.
The metric used to determine the quality of the sensing is usually submodular in
nature. Due to
the persistent operation, it is desirable to have a closed walk or a
tour over which the sensing robot travels. This motivates the problem of
finding a tour that has the maximum reward.

Various results exist for maximizing a monotone submodular function
over an independence system constraint.  This problem is known to be
NP-hard, even though minimization of a submodular function can be
achieved in polynomial time (\cite{AS:00},\cite{SI-LF-SF:01}).
Approximation bounds exist for optimizing over a uniform matroid
\cite{GLN-LAW-MLF:78A}, any single matroid \cite{GC-CC-MP-JV:11}, an
intersection of $p$ matroids and, more generally, $p$-systems
\cite{GLN-LAW-MLF:78B} as well as for the class of $k$-exchange
systems \cite{JW:12}. Some bounds that include
the dependence on curvature are evaluated in \cite{MC-GC:84}.

\emph{Contributions:} The contributions of this paper are to present
and analyze two simple algorithms for constructing a maximum tour on a
graph.  The metric used in maximizing the ``reward" of a particular
tour is a positive monotone submodular function of the edges.
We frame this problem as an optimization over an
independence system constraint.  The first method is greedy and is
shown to have a $\frac{1}{2+\kappa}$ approximation. The second method creates
a 2-matching and then turns it into a tour. This gives a
$\max\set{\frac{2}{3(2+\kappa)},\frac{2}{3}(1-\kappa)}$ worst case approximation
where $\kappa$ is the curvature of the submodular function.  Both
techniques require $O(|V|^3)$ value oracle calls to the submodular
function.  The algorithms are also extended to directed graphs.
To obtain these results, we present
a new bound for the greedy algorithm as a function of curvature.
We also present some preliminary results for the case of a
multi-objective optimization consisting of submodular (sensing)
rewards on the edges along with modular (travel) costs.  We
incorporate these two objectives into a single function, but it is no
longer monotone nor is it positive.  We provide bounds on the
performance of our algorithms in this case, but they depend on the
relative weight of the rewards.

\emph{Organization:} The organization of this paper is as follows.  In
Section~\ref{sec:background} we review some material on independence
systems, submodularity, graphs and approximation methods for
submodular functions.  In Section~\ref{sec:problem} we formalize our
problem.  In Section~\ref{sec:greedy} we analyze a simple greedy
strategy.  In Section~\ref{sec:matching} we present and analyze a
strategy to construct a solution using a matching.  In
Section~\ref{sec:directedExtension} we look at how the presented
algorithms extend to the case where the graph is directed.  Finally,
in Section~\ref{sec:incCosts} we discuss a method to incorporate costs
into the optimization.  Some simulation results are provided in
Section~\ref{sec:sims} comparing the given strategies for various
scenarios.

\section{Preliminaries}
\label{sec:background}

Here we present preliminary concepts and give a brief summary of results on
combinatorial optimization problems over independence systems.

\subsection{Independence systems}

Combinatorial optimization problems can often be formulated as the maximization or minimization
over a set system $(E,\mathcal{F})$ of a cost function $f:\mathcal{F} \to \real$,
where $E$ is the base set of all
elements and ${\cal  F}\subseteq 2^E$. 
An \textbf{independence system} is a set system that is closed under subsets (i.e., if $A \in \mathcal{F}$ then $B \subseteq A \implies B\in {\cal F}$).  Sets in $\cal F$ are referred to as ``independent sets".  The set of maximal independent sets (i.e., all $A \in {\cal F}$ such that $A \cup \{x\} \notin {\cal F}, \forall x \in E \setminus A$) are the \textbf{bases}.

\begin{definition}[$p$-system]
Given an independence system $S = (E,{\cal F})$.
For any $A \subseteq E$ let
\begin{align*}
U(A) &:=
\max_{ \set{B: B \text{ is a basis of } A} } |B|
\\
L(A) &:=
\min_{ \set{B: B \text{ is a basis of } A} } |B|
\end{align*}
be the sizes of the 
maximum and minimum cardinality bases of $A$ respectively.
For $S$ to be a $p$-system,
\[ U(A) \leq p L(A), \forall A \subseteq E. \]
\end{definition}

\begin{definition}[$p$-extendible system] %
  \label{def:p_extend} %
  An independence system $(E,{\cal F})$ is $p$-extendible if given an independent set $B \in {\cal F}$, for every subset $A$ of $B$ and for every $x \notin A$ such that $A\cup\set{x} \in {\cal F}$, there exists $C \subseteq B \setminus A$ such that $|C| \leq p$ and for which $(B\setminus C)\cup\set{x} \in {\cal F}$.
\end{definition}

\begin{remark}
A $p$-extendible system is a $p$-system.
\oprocend
\end{remark}

\begin{definition}[Matroid]
  An independence system $(E, \mathcal{F})$ is a matroid if it satisfies the
  additional property:
\begin{itemize}
\item If $X,Y \in \mathcal{F}$ and $|X| > |Y|$, then $\exists x \in X \backslash Y$ with $Y \cup \{x\} \in \mathcal{F}$
\end{itemize}
\end{definition}

\begin{remark}
A matroid system is a $1$-extendible system.
\oprocend
\end{remark}
\begin{remark}
Any independence system can be represented as the intersection of a finite
number of matroids \cite{BK-JV:07}.
\oprocend
\end{remark}

An example of a matroid is the \textit{partition matroid}.
The base set is the union of $n$ disjoint sets, i.e.
$E = \bigcup_{i=1}^n E_i$ where $E_i \cap E_j = \emptyset$ for $i\neq j$.
Also given $k \in \Int_+^n$.
The matroid is
${\cal F} := \set{A \subseteq E: |A\cap E_i| \leq k_i, \forall i =1\ldots n}$.

\subsection{Submodularity}

Without any additional structure on the set-function $f$,
the optimization problem is generally intractable.  However, a fairly
general class of cost functions for which approximation
algorithms exist is the class of submodular set functions.  

\begin{definition}[Submodularity]
  Let $N$ be a finite set.  A function $f:2^N \to \real$ is submodular
  if
  \[ 
  f(S) + f(T) \geq f(S \cup T) + f(S \cap T), \quad \forall S,T
  \subseteq N.
  \]
\end{definition}

Submodular functions satisfy the property of \textit{diminishing
  marginal returns}. That is, the contribution of any element $x$ to 
the total value of a set can only decrease as the set gets bigger.
More formally, let $\Delta_A(B) := f(A \cup B) - f(A)$. Then,
\[  
\Delta_A(x) \geq \Delta_B(x),
\quad \forall A \subseteq B \subseteq N.
\]

Since the domain of $f$ is $2^N$, there are an exponential number of possible values
for the set function. As a result, enumerating the value of every single subset of the 
base set is not an option.
We will assume that $f(S)$ for any $S \subseteq N$ is determined by a black box function.
This \textit{value oracle} is assumed to run in polynomial time in the size of the input set.

The class of submodular functions is fairly broad and includes linear functions.
One way to measure the degree of submodularity is the \textit{curvature}.
A submodular function is said to have a curvature of $\kappa \in [0,1]$ if for any
$A \subset N$ and $e \in N \setminus A$
\begin{align}
\label{eqn:curvature}
\Delta_A(e) \geq (1-\kappa) f(e).
\end{align}
In other words, the minimum possible marginal benefit of any element $e$ is
within a factor of $(1-\kappa)$ of its maximum possible benefit.

We formulate a slightly stronger notion of the curvature - the independence system
curvature, $\kappa_I$ - by taking the independence system in to account. 
In this case, \eqref{eqn:curvature} need only be satisfied for any
$A \in {\cal F}$ and $e \in N \setminus A, A \cup \{e\} \in {\cal F}$.
This value of curvature will be lower than the one obtained by 
the standard definition given above.

\subsection{Greedy Algorithms}
The greedy algorithm is a simple and well-known method for finding solutions to
optimization problems. The basic idea is to choose the ``optimal" element at each
step. So given a base set of elements $E$, the solution $S$ is constructed as:
\begin{enumerate}
\item Pick the best element $e$ from $E$.
\item If $S \cup \set{e}$ is a feasible set, then $S = S\cup \set{e}$.
\item $E = E\setminus \set{e}$.
\item Repeat until $S$ is maximal or $E$ is empty.
\end{enumerate}

A general greedy algorithm for maximizing a submodular function over an
independence system is given in Algorithm~\ref{alg:generalGreedy}.
Since the objective function is submodular, the marginal value of each element in the
base set changes in every iteration and so has to be recalculated.
This causes the runtime
to be $O(|N|^2f)$ where $f$ is the runtime of the value oracle.

\begin{algorithm2e}[htb]
\KwIn{An independence system $M = (E,{\cal F})$.
	A function $f:{\cal F} \to \real$.}
\KwOut{Basis of ${\cal I}$.}
$N \leftarrow E$\;
\While{$N \neq \emptyset$ {\bf and} $S$ not maximal in $\cal F$}
{
	\ForEach{$e \in N$}{
		calculate $\Delta(e) := f(S\cup\set{e}) - f(S)$\;
	}
	$m := \argmax_e \Delta(e)$\;
	\lIf{$S \cup \set m \in {\cal F}$}{
		$S \leftarrow S \cup \set{m}$\;
	}
	$N \leftarrow N \setminus \set m$\;
}
\Return S\;
\caption{generalGreedy$\big( (E,{\cal F}), f \big)$}
\label{alg:generalGreedy}
\end{algorithm2e}

Based on the properties of submodularity, a more efficient implementation can be constructed.
The specific property that helps here is that of decreasing marginal benefit.
Given $A \subset B$ and two elements $e_1, e_2 \notin B$.
If $\Delta_A(e_1) \leq \Delta_B(e_2)$,
then we can conclude that $\Delta_B(e_1) \leq \Delta_B(e_2)$.
Therefore, $\Delta_B(e_1)$ does not need to be calculated.

This idea was first proposed in \cite{MM:78}.
Algorithm~\ref{alg:generalGreedyAcc} is a modified version
of the accelerated greedy algorithm presented in \cite{DG-AK:11} and takes into account the independence constraint.
The main modification from \cite{DG-AK:11} is the check for independence in line~\ref{algln:accGreedyCheckInd}.

\begin{algorithm2e}[htb]
\KwIn{An independence system $(E,{\cal F})$.
	A function $f:{\cal F} \to \real$.}
\KwOut{Basis of ${\cal F}$.}
Priority Queue $Q \leftarrow \emptyset$\;
\lForEach{$e \in E$}{$Q$.insert($e$,$\infty$)\;}
\While{$Q \neq \emptyset$ {\bf and} $S$ not maximal in $\cal F$}
{
	$e_{max} \leftarrow$ NULL;
	$\delta_{max} \leftarrow -\infty$\;
	\While{$Q \neq \emptyset$ {\bf and} $\delta_{max} < Q$.maxPriority()}
	{
		$e \leftarrow Q$.pop()\;
		\If{$T \cup \set e \in {\cal F}$ \label{algln:accGreedyCheckInd}}
		{
			$\Delta(e) \leftarrow f(T\cup\set{e}) - f(T)$\;
			$Q$.insert($e$,$\Delta(e)$)\;
			\If{$\delta_{max} < \Delta(e)$}
			{
				$\delta_{max} \leftarrow \Delta(e)$;
				$e_{max} = e$\;
			}
		}
	}
	\If{$Q \neq \emptyset$}
	{
		$S \leftarrow S \cup \set{e_{max}}$\;
		$Q$.remove($e_{max}$)\;
	}
}
\Return S\;
\caption{generalGreedy$\big( (E,{\cal F}), f \big)$ - Accelerated}
\label{alg:generalGreedyAcc}
\end{algorithm2e}

Unfortunately, the accelerated greedy algorithm has the same worst case bound as
the naive version.
However, empirical results have shown that it can achieve significant speedup
factors \cite{MM:78},\cite{DG-AK:11}.

\subsection{Approximations}
Here we consider some useful results for optimization over independence systems.
First, lets look at the case of a linear objective function.
In the special case of a matroid ($p=1$), the greedy algorithm gives an optimal
solution \cite{BK-JV:07}.

\begin{lemma}
\label{lem:greedylnrpsystem}
For the problem of finding the basis of a $p$-system that maximizes some linear
non-negative function, a greedy algorithm gives a $\frac{1}{p}$ approximation.
\end{lemma}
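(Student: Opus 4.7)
The plan is to use the classical weight-rearrangement argument due to Jenkyns, which reduces the problem to a cardinality comparison between the greedy basis and the optimal basis restricted to prefixes of the sorted element list.

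First, I would assume the weight function has the form $f(S) = \sum_{e \in S} w(e)$ with $w(e) \geq 0$, and relabel the elements so that $w(e_1) \geq w(e_2) \geq \cdots \geq w(e_n) \geq 0$; set $w_{n+1} := 0$ by convention. Since $f$ is linear and non-negative, the greedy algorithm in Algorithm~\ref{alg:generalGreedy} simply processes the elements in this fixed order and adds $e_j$ to the current partial solution whenever independence is preserved. Let $G$ denote the greedy output and $O$ an optimal basis. Define the prefix $E_j := \{e_1,\ldots,e_j\}$ and the restrictions $G_j := G \cap E_j$, $O_j := O \cap E_j$.

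The key combinatorial step is to show that $G_j$ is a \emph{maximal} independent subset of $E_j$ (i.e., a basis of $E_j$) for every $j$. For any $e_i \in E_j \setminus G_j$ with $i \leq j$, the greedy rejected $e_i$ at step $i$, meaning $G_{i-1} \cup \{e_i\} \notin \mathcal{F}$. Since $G_{i-1} \subseteq G_j$ and $\mathcal{F}$ is closed under subsets, we also get $G_j \cup \{e_i\} \notin \mathcal{F}$, so $G_j$ cannot be extended inside $E_j$. Meanwhile $O_j \subseteq O \in \mathcal{F}$ is independent. By the $p$-system property applied to $A = E_j$,
\[
|O_j| \;\leq\; U(E_j) \;\leq\; p\, L(E_j) \;\leq\; p\,|G_j|.
\]

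The final step is an Abel summation. For any $X \subseteq E$,
\[
f(X) \;=\; \sum_{j=1}^n w(e_j)\,\mathbf{1}[e_j \in X] \;=\; \sum_{k=1}^n \bigl(w(e_k) - w(e_{k+1})\bigr)\,|X_k|,
\]
where $X_k := X \cap E_k$. Applying this to $X = G$ and $X = O$ and using that each coefficient $w(e_k) - w(e_{k+1})$ is non-negative (by the sorting) together with $|O_k| \leq p\,|G_k|$ termwise yields $f(O) \leq p\,f(G)$, which is the desired bound $f(G) \geq \tfrac{1}{p} f(O)$.

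The main obstacle is really just the maximality claim for $G_j$; it is the one place where the structure of the independence system (closure under subsets together with the definition of $p$-system) enters, and it is what justifies the termwise inequality $|O_k| \leq p\,|G_k|$. Once that is in hand, the Abel summation and non-negativity of the weight gaps deliver the approximation ratio mechanically.
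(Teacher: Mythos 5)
Your proof is correct. The paper states this lemma without proof, as a classical result for independence systems, and your argument is exactly the standard one (due to Jenkyns and to Korte--Hausmann): the prefix restrictions $G_j$ of the greedy set are bases of $E_j$ by subset-closure of $\mathcal{F}$, the $p$-system property gives $|O_j| \leq p\,|G_j|$ termwise, and Abel summation with the sorted nonincreasing weights transfers the cardinality bound to the weighted objective. Nothing is missing; the only point worth making explicit is that for a linear objective the greedy marginal values are the fixed weights $w(e)$, so the algorithm indeed reduces to scanning the sorted order as you assume.
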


In~\cite{GLN-LAW-MLF:78A} the authors look at maximizing a submodular
function over a uniform matroid (selecting $k$ elements from a set). 
They show that the greedy algorithm gives a worst case approximation
of $1 - \frac{1}{e}$.
This is actually the best factor that can be achieved as in \cite{UF:98} 
it is shown that to obtain a $(1 - \frac{1}{e} + \epsilon)$-approximation for any $\epsilon > 0$
is NP-hard for the maximum $k$-cover problem, which is the special case of a
uniform matroid constraint.

In \cite{GLN-LAW-MLF:78B}, the optimization problem is generalized to
an independence system represented as the intersection of $P$ matroids.
The authors state that the result can be extended to $p$-systems.
A complete proof for this generalization is given in \cite{GC-CC-MP-JV:11}.
For a single matroid constraint, an algorithm to obtain a $(1-1/e)$ approximation
is also given in \cite{GC-CC-MP-JV:11}.

\begin{lemma}
\label{lem:greedysubmodpsystem}
For the problem of finding the basis of a $p$-system that maximizes some monotone
non-negative submodular function, a greedy algorithm gives a $\frac{1}{p+1}$
approximation.
\end{lemma}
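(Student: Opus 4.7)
The plan is to give a charging argument comparing the greedy output $G = (g_1,\ldots,g_k)$ (in the order the elements are selected) with an optimal basis $O^\ast$, writing $G_i = \{g_1,\ldots,g_i\}$ and $G_0 = \emptyset$. The target inequality is $f(O^\ast) \leq (p+1)\,f(G)$, which immediately gives the $\tfrac{1}{p+1}$ ratio.

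The first step is to exploit submodularity (and $f(\emptyset)\geq 0$, using monotonicity) to write
\[
 f(O^\ast) \;\leq\; f(O^\ast \cup G) \;\leq\; f(G) + \sum_{e \in O^\ast \setminus G} \Delta_G(e).
\]
Each marginal $\Delta_G(e)$ can in turn be upper-bounded by $\Delta_{G_{i-1}}(e)$ for any $i$, by the diminishing-returns property. So if I can distribute the elements of $O^\ast \setminus G$ across the greedy steps in a way that is compatible with greedy's choice rule, I win.

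The key structural step is a partition lemma for $p$-systems (essentially the Korte--Hausmann/Jenkyns bound used in Fisher--Nemhauser--Wolsey and made explicit in \cite{GC-CC-MP-JV:11}): there exists a partition $O^\ast \setminus G = O_1 \cup \cdots \cup O_k$ with $|O_i| \leq p$ and such that every $e \in O_i$ satisfies $G_{i-1} \cup \{e\} \in \mathcal{F}$. Granting this, the greedy selection rule at step $i$ ensures $\Delta_{G_{i-1}}(g_i) \geq \Delta_{G_{i-1}}(e)$ for every $e \in O_i$, whence
\[
 \sum_{e \in O^\ast \setminus G} \Delta_G(e) \;\leq\; \sum_{i=1}^{k} \sum_{e \in O_i} \Delta_{G_{i-1}}(e) \;\leq\; \sum_{i=1}^{k} p\,\Delta_{G_{i-1}}(g_i) \;=\; p\,f(G),
\]
using the telescoping sum $\sum_i \Delta_{G_{i-1}}(g_i) = f(G) - f(\emptyset) \leq f(G)$. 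Combined with the first display this gives $f(O^\ast) \leq (p+1)\,f(G)$.

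The main obstacle, and the place where the $p$-system hypothesis is actually used, is the construction of the partition with $|O_i|\leq p$ and the feasibility property $G_{i-1}\cup\{e\}\in\mathcal{F}$. This is a purely combinatorial statement about independence systems — it does not involve $f$ at all — and is proved by an exchange/induction argument that repeatedly invokes the bound $U(A)\leq p\,L(A)$ on the ratio of bases inside appropriate restrictions of the ground set. I would either cite the Korte--Hausmann or Jenkyns version of this lemma or sketch the inductive exchange argument; once it is in hand, the rest of the proof is the short submodular/greedy computation above.
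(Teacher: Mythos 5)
Your proof is correct and follows essentially the same route the paper relies on: the paper states this lemma by citing \cite{GLN-LAW-MLF:78B} and \cite{GC-CC-MP-JV:11}, and its own proof of the curvature generalization (Theorem~\ref{thm:greedycurvsubmodpsystem}) uses exactly your decomposition --- the submodular upper bound $f(W\cup S)\leq f(S)+\sum_{e\in W\setminus S}\Delta_S(e)$ combined with the partition of the optimum into parts of size at most $p$ charged to the greedy increments via the same appendix of \cite{GC-CC-MP-JV:11}. Your argument is the special case obtained by replacing the curvature inequality with plain monotonicity, so there is nothing to add.
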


We know that linear functions (i.e. curvature is 0) are a special case of
submodular functions so it is reasonable to expect
the greedy bound to be a continuous function of the curvature.
In \cite{MC-GC:84}, bounds that include the curvature are presented for
single matroid systems, uniform matroids and independence
systems. For a system that is the intersection of $p$ matroids the greedy bound
is shown to be $\frac{1}{p+\kappa}$.
In the following we extend this result to $p$-systems.

\begin{theorem}
\label{thm:greedycurvsubmodpsystem}
Consider the problem of maximizing a monotone submodular function $f$
with curvature $\kappa$, over a $p$-system.
Then, the greedy algorithm gives an approximation factor of $\frac{1}{p+\kappa}$.
\end{theorem}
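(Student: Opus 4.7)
The plan is to adapt the curvature refinement of Conforti and Cornuéjols~\cite{MC-GC:84}, originally established for intersections of $p$ matroids, to general $p$-systems. The approach combines the same combinatorial partition argument that drives the $\frac{1}{p+1}$ bound in Lemma~\ref{lem:greedysubmodpsystem} with a curvature-based lower bound on $f(O\cup G)-f(O)$, and extracts the tighter $\frac{1}{p+\kappa}$ factor through careful accounting of terms that appear in both bounds.

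Let $G=\{g_1,\dots,g_k\}$ be the greedy basis in selection order, let $O$ be an optimal basis, and write $M_i:=f(G_i)-f(G_{i-1})$, so that $f(G)=\sum_i M_i$. First I would invoke the partition lemma that underlies Lemma~\ref{lem:greedysubmodpsystem}: the elements of $O$ admit a partition $T_1,\dots,T_k$ with $|T_i|\le p$ such that every $o\in T_i$ is feasible at greedy step $i$ (i.e., $G_{i-1}\cup\{o\}\in{\cal F}$), and moreover $g_i\in T_i$ whenever $g_i\in O$. With $\tau_i:=1$ if $g_i\in O$ and $0$ otherwise, this gives $|T_i\setminus G|\le p-\tau_i$. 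Applying submodularity and the greedy rule $\Delta_{G_{i-1}}(g_i)\ge\Delta_{G_{i-1}}(o)$ to each term of $f(O\cup G)=f(G)+\sum_{o\in O\setminus G}\Delta_{G\cup O_{<o}}(o)$ then yields the upper bound
\[
f(O\cup G)\le(p+1)f(G)-\textstyle\sum_i\tau_iM_i.
\]

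The new ingredient is a curvature-based lower bound on $f(O\cup G)-f(O)$. Telescoping over $G\setminus O$ in any order and applying $\Delta_A(e)\ge(1-\kappa)f(\{e\})$, together with $f(g_i)\ge M_i$ (submodularity, assuming without loss of generality that $f(\emptyset)=0$), produces
\[
f(O\cup G)-f(O)\ge(1-\kappa)\textstyle\sum_{g_i\notin O}M_i=(1-\kappa)\bigl(f(G)-\sum_i\tau_iM_i\bigr).
\]
Subtracting this lower bound from the previous upper bound and simplifying gives $f(O)\le(p+\kappa)f(G)-\kappa\sum_i\tau_iM_i\le(p+\kappa)f(G)$, which is exactly the $\frac{1}{p+\kappa}$ approximation.

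The hard part will be establishing the refined form of the partition lemma, namely that $g_i\in T_i$ whenever $g_i\in O$, so that the $\sum_i\tau_iM_i$ terms produced by the submodular upper bound and by the curvature lower bound cancel exactly. For an intersection of $p$ matroids this property is a direct consequence of iterated matroid exchange; for a general $p$-system one must work from the rank-quotient characterization and construct the partition so that each element of $G\cap O$ occupies its own time slot. Once this combinatorial ingredient is secured, the rest of the proof is a straightforward algebraic combination of submodularity, the greedy selection rule, and the curvature inequality.
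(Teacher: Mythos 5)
Your proposal is correct and follows the same skeleton as the paper's proof: both adapt the Conforti--Cornu\'ejols curvature argument by sandwiching $f(O\cup G)$ between a curvature-based lower bound and a partition-based upper bound built from the $p$-system greedy analysis of \cite{GC-CC-MP-JV:11}. The difference is in how the overlap $O\cap G$ is handled, and here your version is actually the more careful one. The paper telescopes its lower bound over \emph{all} of $S$, writing $f(W\cup S)\geq f(W)+\sum_{e\in S}\rho_{(W\cup S)\setminus e}(e)$, which lets it reach $f(W)+(1-\kappa)f(S)$ directly with the unrefined bound $|W_i|\leq p$; but that inequality fails when $W\cap S\neq\emptyset$ (for modular $f$ with $f(\set{1})=f(\set{2})=1$, $f(\set{1,2})=2$, $W=\set{1}$, $S=\set{1,2}$, it reads $2\geq 3$). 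You instead telescope only over $G\setminus O$, which costs you the terms $\sum_i\tau_iM_i$, and recover them through the refined partition property $g_i\in T_i$ whenever $g_i\in O$, i.e.\ $|T_i\setminus G|\leq p-\tau_i$, so that the two corrections cancel up to a harmless $-\kappa\sum_i\tau_iM_i$. That refinement does hold for general $p$-systems: taking $S_i$ to be the elements of $O$ either contained in or no longer addable to $G_i$ and $T_i=S_i\setminus S_{i-1}$, the $p$-system property gives $|S_i|\leq pi$, any $g_i\in O$ lands in $T_i$ by construction, and the prefix bounds $|S_i\setminus G|\leq pi-\sum_{j\leq i}\tau_j$ combined with the monotone decrease of the $M_i$ yield $\sum_i|T_i\setminus G|M_i\leq\sum_i(p-\tau_i)M_i$ by Abel summation. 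So your ``hard part'' is securable, and your bookkeeping in fact patches the one loose step in the paper's own argument rather than merely reproducing it.
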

\begin{proof}
The proof is inspired from the proof where the system is the intersection
of $p$ integral polymatroids \cite[Theorem~6.1]{MC-GC:84}.
Let $W$ be the optimal set.
Let $S = \set{s_1,\ldots,s_k}$ be the result of the greedy algorithm where the
elements are enumerated in the order in which they were chosen.
For $t = 1,\ldots,k$, let $S_t := \bigcup_{i=1}^t s_i$
and $\rho_t := \rho_{S_{t-1}}(s_t) = f(S_t)- f(S_{t-1})$.
Therefore, $f(S) = \sum_{t=1}^k \rho_t$.
So,
\begin{align}
f(W \cup S) &\geq f(W) + \sum_{e \in S} \rho_{(W \cup S) \setminus e}(e)
\notag\\
&\geq f(W) + \sum_{e \in S} (1-\kappa)f(e)
\notag\\
&\geq f(W) + (1-\kappa)\sum_{t=1}^k \rho_t, \label{eqn:lwrBound}
\end{align}
where the first and third inequalities hold due to submodularity and the second
is by the definition of curvature.
Also,
\begin{align}
f(W \cup S) \leq f(S) + \sum_{e\in W \setminus S}\rho_S (e).
\label{eqn:uprBound1}
\end{align}
Following from the analysis of the greedy algorithm for $p$-systems in
\cite[Appendix B]{GC-CC-MP-JV:11}, a $k$-partition $W_1,\ldots,W_k$ of $W$,
can be constructed such that $|W_i| \leq p$ and
$\rho_t \geq \rho_{S_{t-1}}(e)$ for all $e \in W_t$.
Therefore,
\begin{align}
\sum_{e\in W\setminus S}\rho_S (e)
&= \sum_{i=1}^k \sum_{e\in W_i\setminus S}\rho_S (e)
\notag
\\
&\leq \sum_{i=1}^k |W_i\setminus S| \rho_t
\leq p \sum_{i=1}^k \rho_t, \label{eqn:uprBound2}
\end{align}
since $\rho_S (e) \leq \rho_{S_{t-1}}(e)$ for all $t$ by submodularity.

Combining \eqref{eqn:uprBound1} and \eqref{eqn:uprBound2} with \eqref{eqn:lwrBound},
the desired result can be derived as follows,
\begin{align*}
&f(W) + (1-\kappa)f(S) \leq f(S) + p f(S)
\\
\implies &f(W) \leq (p+\kappa) f(S).
\end{align*}
\end{proof}

\subsection{Set Systems on Graphs}
In this section we introduce some graph constructs and then give some results 
relating them to $p$-systems.

We are given a graph $G = (V, E, c, w)$ where $V$ is the set of vertices and $E$
is the set of edges.
Each of the edges has a cost given by the function $c: E \to \real_{\geq 0}$.
A set of edges has a cost of $c(S) = \sum_{q \in S} c(q)$.
A set of edges also has a reward or utility associated with it given by
the submodular function $w:2^E \to \real_{\geq 0}$. 

A \textit{walk} is a sequence of vertices $(v_1,\ldots, v_n)$ such that
$e_i := \{v_i,v_{i+1}\} \in E$.
A \textit{path} is a walk such that $e_i \neq e_j, \forall i,j$.
A \textit{simple path} is a path such that
$v_i \neq v_j, \forall i,j = 1 \ldots n$.
A \textit{simple cycle} is a path such that
$v_i \neq v_j, \forall i,j = 1 \ldots n-1$,
and $v_n = v_1$.
A \textit{Hamiltonian cycle} is a simple cycle that visits all the vertices of $G$.
We will refer to a Hamiltonian cycle as a \textbf{tour},
and a simple cycle that is not a tour as a \textbf{subtour}.
Let $\mathcal{H} \subset 2^E$ be the set of all tours in $G$.

\subsubsection{b-matching}
Given an undirected graph $G = (V,E)$.
Let $\delta(v)$ denote the set of edges that are incident to $v$.
\begin{definition}[$b$-matching]
Given edge capacities
$u:E[G] \to \nat \cup \infty$ and vertex capacities
$b:V[G] \to \nat$.
A {\bf b-matching} is an assignment to the edges $f:E[G] \to \Int_+$
such that
$f(e) \leq u(e), \forall e \in E[G]$ and
$\sum_{e \in \delta(v)} f(e) \leq b(v), \forall v \in V[G]$.
\\
If $\sum_{e \in \delta(v)} f(e) = b(v), \forall v$,
the b-matching is {\bf perfect}.
\\
A {\bf simple} b-matching is the special case that the edge capacity $u(e) = 1$
for each $e \in E$.
\end{definition}
For the rest of this paper, any reference to a b-matching will
always refer to a simple b-matching.
\begin{theorem}[Mestre, \cite{JM:06}]
A simple $b$-matching is a $2$-extendible system.
\end{theorem}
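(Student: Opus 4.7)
The plan is to unpack the definition of $2$-extendibility and verify it by a direct counting argument at each of the two endpoints of the edge being inserted. Let $B$ be a simple $b$-matching, $A \subseteq B$, and $x = \{u,v\} \notin A$ an edge such that $A \cup \{x\}$ is also a simple $b$-matching. I need to exhibit a set $C \subseteq B \setminus A$ with $|C| \leq 2$ such that $(B \setminus C) \cup \{x\}$ remains a simple $b$-matching.

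The idea is to ``pay'' at most one edge removal per endpoint of $x$. For the endpoint $u$, if $|B \cap \delta(u)| < b(u)$, then adding $x$ does not violate the degree constraint at $u$, so nothing needs to be removed. Otherwise $|B \cap \delta(u)| = b(u)$, and I claim there is some edge $e_u \in (B \setminus A) \cap \delta(u)$ that can be removed. The key counting step uses the assumption that $A \cup \{x\}$ is independent, which forces $|A \cap \delta(u)| \leq b(u) - 1$. Combined with $A \subseteq B$ and $|B \cap \delta(u)| = b(u)$, this gives
\begin{equation*}
|(B \setminus A) \cap \delta(u)| \;=\; |B \cap \delta(u)| - |A \cap \delta(u)| \;\geq\; b(u) - (b(u)-1) \;=\; 1,
\end{equation*}
so an edge $e_u$ incident to $u$ is available in $B \setminus A$. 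The identical argument produces $e_v \in (B \setminus A) \cap \delta(v)$ whenever degree saturation occurs at $v$.

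Setting $C := \{e_u, e_v\}$ (omitting endpoints that did not require removal) yields $|C| \leq 2$ and $C \subseteq B \setminus A$. To verify $(B \setminus C) \cup \{x\}$ is a simple $b$-matching, I would check the degree bound vertex by vertex: at $u$ and $v$ the construction was arranged precisely so that adding $x$ after removing the chosen edge keeps the incidence count at most $b(u)$ and $b(v)$ respectively; at any other vertex $w$, the incidence set is a subset of $B \cap \delta(w)$, which already satisfies $|B \cap \delta(w)| \leq b(w)$. Simplicity (edge capacity $1$) is preserved because $x \notin A$ and $e_u, e_v \neq x$ by construction.

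The only conceptual obstacle is making sure the counting argument at each endpoint only consumes one removal and that the two endpoint removals do not interfere; since each $e_u, e_v$ is selected from $B \setminus A$ and used purely to free capacity at its own endpoint, there is no conflict even if $e_u = e_v$ (in which case $|C|=1 \leq 2$). This completes the verification that the system is $2$-extendible.
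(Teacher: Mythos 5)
The paper states this theorem without proof, citing Mestre; your argument is correct and is essentially the standard one from that reference: budget one removal per saturated endpoint of $x$, using the independence of $A \cup \{x\}$ to guarantee that the edge to be deleted at each saturated endpoint can be found in $B \setminus A$ rather than in $A$. The only points you gloss over are trivial (if $x \in B$ one may take $C = \emptyset$, and $e_u,e_v$ cannot coincide with $x$ or interfere at each other's endpoints since that would force them to equal $x$), so nothing further is needed.
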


\subsubsection{The Traveling Salesman Problem}
Given a complete graph, the classical Traveling Salesman Problem (TSP) is to find a 
minimum cost tour.
The TSP can be divided into two variants: the Asymmetric TSP and the Symmetric TSP.
In the ATSP, for two vertices $u$ and $v$, the cost of edge $(u,v)$ is different from 
the cost of $(v,u)$, which amounts to the graph being directed.
In the STSP, $c(u,v) = c(v,u)$, which is the case if the graph in undirected.

In order to formulate the TSP, the set of possible solutions can be defined using an
independence system.
The base set of the system is the set of edges in the complete graph.
For the ATSP, a set of edges is independent if they form a collection of 
vertex disjoint paths, or a complete Hamiltonian cycle.
\begin{theorem}[Mestre, \cite{JM:06}]
The ATSP independence system is 3-extendible.
\end{theorem}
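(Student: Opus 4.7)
The plan is to unpack the $p$-extendibility definition directly. Fix an independent set $B$ (a disjoint union of directed paths, or a Hamiltonian cycle), a subset $A \subseteq B$, and an edge $x = (u,v) \notin A$ with $A \cup \{x\} \in \mathcal{F}$. The goal is to exhibit at most three edges of $B \setminus A$ whose removal restores independence after inserting $x$. The trivial case $x \in B$ is handled by $C = \emptyset$, so assume $x \notin B$.

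First I would identify the two ``degree-violation'' candidates: let $e_1$ be the (unique) outgoing edge from $u$ in $B$ if one exists, and $e_2$ the unique incoming edge to $v$ in $B$ if one exists. Since in any element of $\mathcal{F}$ every vertex has in- and out-degree at most one, and since $A \cup \{x\}$ is independent with $x$ already contributing an outgoing edge at $u$ and an incoming edge at $v$, neither $e_1$ nor $e_2$ can lie in $A$; thus both lie in $B \setminus A$. A quick observation shows $e_1 \neq e_2$ (equality would force $e_1 = (u,v) = x \in B$). Put $C_0 = \{e_1,e_2\} \cap B$ and let $B' = (B \setminus C_0) \cup \{x\}$. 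Then every vertex of $B'$ has in- and out-degree at most one.

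The remaining structural issue is that $B'$ may contain a cycle. Because $B \setminus C_0$ is obtained from $B$ by deleting edges (and $B$ is either a collection of paths, or a Hamiltonian cycle with at least one edge removed from $C_0$), the set $B \setminus C_0$ is itself a collection of vertex-disjoint paths; hence $B'$ contains at most one cycle, and any such cycle must contain $x$. If no cycle is created, or if the unique cycle is Hamiltonian, then $B' \in \mathcal{F}$ and we are done with $|C| \le 2$. Otherwise $B'$ contains a non-Hamiltonian subtour $\Gamma$ traversing $x$, and I would break $\Gamma$ by deleting one edge $e_3 \in \Gamma \setminus \{x\}$.

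The crux of the argument, and the only step that needs real work, is to show such an $e_3$ can be found in $B \setminus A$. Suppose for contradiction every edge of $\Gamma \setminus \{x\}$ belongs to $A$. Then $A$ contains a directed path from $v$ to $u$, and $A \cup \{x\}$ contains the cycle $\Gamma$. Independence of $A \cup \{x\}$ forces $\Gamma$ to be a Hamiltonian cycle, contradicting the assumption that $\Gamma$ is a subtour. Hence some edge $e_3 \in \Gamma \setminus \{x\}$ lies in $B \setminus A$, and setting $C = C_0 \cup \{e_3\}$ with $|C| \le 3$ yields $(B \setminus C) \cup \{x\} \in \mathcal{F}$, proving 3-extendibility.
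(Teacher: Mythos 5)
Your proof is correct. Note that the paper does not actually prove this statement---it is cited to Mestre---so the only internal point of comparison is the paper's proof of the undirected analog, which follows the same outline (one edge per endpoint of $x$ for the degree constraints, plus one edge to break the subtour) but is considerably less careful: it never verifies that the edges to be deleted can actually be chosen from $B \setminus A$, which, as you correctly identify, is the only step requiring a real argument. Your handling of that step is sound: the degree-conflicting edges $e_1,e_2$ cannot lie in $A$ because $x$ already uses the out-slot at $u$ and the in-slot at $v$ and $A \cup \{x\}$ is independent; and the cycle created by $x$ cannot consist entirely of $A$-edges together with $x$ unless it is Hamiltonian, again by independence of $A \cup \{x\}$. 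The supporting structural claims (that $B \setminus C_0$ is a disjoint union of directed paths even when $B$ is a Hamiltonian cycle, since in that case $C_0 \neq \emptyset$; that adding $x$ to such a path collection creates at most one cycle and that cycle contains $x$) all check out, so your argument is a complete and self-contained proof of the cited result.
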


The ATSP can be formulated as the intersection of 3 matroids.
These are:
\begin{enumerate}
\item Partition matroid:
Edge sets such that the in-degree of each vertex $\leq 1$
\item Partition matroid:
Edge sets such that the out-degree of each vertex $\leq 1$
\item The 1-graphic matroid:
the set of edges that form a forest with at most one simple cycle.
\end{enumerate}

The STSP is just a special case of the ATSP.
Therefore, the results from the ATSP carry over to the STSP.
Formulating it as an ATSP, however, requires doubling the edges in 
an undirected graph.
Instead, we can directly define an independence system for the STSP.

A set of edges is independent (i.e. belongs to the collection ${\cal F}$) 
if the induced graph is composed of a collection of vertex disjont simple paths or
a Hamiltonian cycle.
This can also be characterized as the following two conditions:
\begin{enumerate}
\item Each vertex has degree at most 2
\item There are no subtours
\end{enumerate}

\begin{theorem}
The undirected TSP independence system is 3-extendible
\end{theorem}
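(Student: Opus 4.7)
The plan is to verify the $3$-extendibility definition directly. Fix $B \in \mathcal{F}$, a subset $A \subseteq B$, and an edge $x = \{u,v\} \notin A$ with $A \cup \{x\} \in \mathcal{F}$; we may assume $x \notin B$, since otherwise $C = \emptyset$ works. The only two obstructions to $(B \setminus C) \cup \{x\}$ being independent are that some endpoint of $x$ may have degree three or that a non-Hamiltonian subtour may appear, and I will address each in turn, showing that at most three edges of $B \setminus A$ need to be deleted.

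For the degree obstruction, I will observe that if $u$ has degree $2$ in $B$, then since $A \cup \{x\}$ is independent the degree of $u$ in $A$ is at most $1$, so at least one of the (at most two) $B$-edges incident to $u$ lies in $B \setminus A$; let $e_u$ denote any such edge, and define $e_v$ analogously. Writing $B' := B \setminus \{e_u, e_v\}$, every vertex of $B' \cup \{x\}$ then has degree at most $2$.

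For the subtour obstruction, I will exploit that $B$ contains at most one cycle (the Hamiltonian tour, if $B$ is a tour), and that in that case $d_u = d_v = 2$ so both $e_u, e_v$ have been removed; hence $B'$ is always a forest. Any cycle in $B' \cup \{x\}$ must therefore pass through $x$ and equal $P \cup \{x\}$, where $P$ is the unique $u$-$v$ path in $B'$ whenever one exists. If $P \cup \{x\}$ is itself a Hamiltonian tour, a sum-of-degrees count rules out all subcases except when no degree edges were removed, in which case $B \cup \{x\}$ already equals that tour and $C = \emptyset$ suffices. Otherwise $P \cup \{x\}$ is a subtour; then $P \not\subseteq A$, since otherwise $A \cup \{x\}$ would contain the subtour $P \cup \{x\}$, contradicting $A \cup \{x\} \in \mathcal{F}$. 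Picking any $e \in P \cap (B \setminus A)$ and removing it destroys the cycle, and since $B' \setminus \{e\}$ remains a forest in which $u$ and $v$ are disconnected no new cycle can appear. Taking $C$ to be the nonempty choices among $\{e_u, e_v, e\}$ gives $|C| \leq 3$.

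The main technical nuisance I expect is the bookkeeping in the Hamiltonian sub-case: confirming via sum of degrees that $P \cup \{x\}$ can only form a Hamiltonian cycle when neither $e_u$ nor $e_v$ was removed, so that the third removal $e$ is genuinely needed only in the proper-subtour case and the bound $|C| \leq 3$ holds throughout.
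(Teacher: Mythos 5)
Your proof is correct and follows essentially the same decomposition as the paper's: at most one edge removed per endpoint of $x$ to restore the degree bound, plus at most one edge to break the single cycle that adding $x$ can create, giving $|C| \leq 3$. You are in fact more careful than the paper on the one point it leaves implicit---that each removed edge can genuinely be taken from $B \setminus A$ (via $\deg_A(u) \leq 1$ for the degree edges, and $P \not\subseteq A$ for the cycle edge)---and your treatment of the Hamiltonian sub-case is sound.
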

\begin{proof}
To show this, we can consider all the cases to show that the system satisfies the
definition of a 3-extendible system.
Specifically, assume some given set $A \subset B \in {\cal F}$ and determine the number
of edges that will need to be removed from $B \setminus A$ so that adding any 
$x = \set{u,v} \notin B$ (such that $A \cup x \in {\cal F}$) to $B$ will maintain independence.

Adding an edge can violate the degree constraint on at most two vertices (specifically $u$ and
$v$) and/or the subtour constraint. To satisfy the degree constraint, at most one edge will
need to be removed from $B$ for each vertex. To satisfy the subtour requirement, at most one
edge will need to be removed from the subtour in order to break the cycle. Therefore, up to
three edges will have to be removed in total which means that the system is 3-extendible.

One case where exactly three edges will have to be removed comes about if $A$ contains 
an edge, $e_1$, incident to $u$ and another, $e_2$, to $v$.
If adding $x$ to $B$ violates both conditions of independence then we know there exists a
path $P = u \leadsto v \subseteq B$. Assume that both $e_1, e_2 \in P$.
Then one edge will have to be removed from $P$ to break the cycle (produced by adding $x$)
and two more will need to be removed to satisfy the degree requirements at $u$ and $v$.
\end{proof}

Since the STSP system is 3-extendible, it is also a 3-system.
A better result is given in the following lemma.

\begin{lemma}[Jenkyns, \protect{\cite{TAJ:79}}]
\label{lem:psysTSP}
On a graph with $n$ vertices the undirected TSP is a $p$-system with
$p = 2 - \floor{\frac{n+1}{2}}^{-1} < 2$.
\end{lemma}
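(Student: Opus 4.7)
The plan is to verify the $p$-system inequality $U(A)\le p\,L(A)$ for every $A\subseteq E$ with $p=2-\lfloor(n+1)/2\rfloor^{-1}$. Fix $A$, and let $B^\star$ be a maximum basis of $A$ with $|B^\star|=M$ and $B_\circ$ a minimum basis with $|B_\circ|=m$. Because every independent set of the undirected TSP system is either a Hamiltonian cycle ($n$ edges) or a vertex-disjoint union of simple paths (at most $n-1$ edges), one has the easy upper bound $M\le n$; the real content of the lemma is the matching lower bound on $m$.

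First I would dispose of the trivial case in which $B_\circ$ is itself a Hamiltonian cycle, in which case $M=m=n$ and the ratio is $1<p$. Otherwise $B_\circ$ decomposes into, say, $r\ge 1$ vertex-disjoint simple paths spanning a vertex set $V_\circ\subseteq V$, with $m=|V_\circ|-r$. Next I would record what the maximality of $B_\circ$ in $A$ forces: no edge $e\in A\setminus B_\circ$ can be adjoined to $B_\circ$ while preserving independence, so every such $e$ is blocked either by a \emph{degree obstruction} (its addition raises some vertex degree in $B_\circ$ to $3$) or by a \emph{subtour obstruction} (it closes a proper sub-path of $B_\circ$ into a non-Hamiltonian cycle). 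Degree obstructions are charged to the at most two $B_\circ$-edges incident to the endpoints of $e$; subtour obstructions are charged uniformly across the edges of the $B_\circ$-path that would be closed.

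Third, I would classify every edge of $B^\star$ by the obstruction that blocks it against $B_\circ$ and apply a counting argument in the spirit of Appendix B of \cite{GC-CC-MP-JV:11} to distribute each edge of $B^\star$ across the edges of $B_\circ$ bearing the blame. The $3$-extendibility established above immediately gives the crude estimate $M\le 3m$; the improvement to the claimed $p$ comes from two observations. (a) At most one subtour obstruction can occur per path of $B_\circ$, because once one chord would close a path into a subtour, no second chord can be added without violating a degree bound. (b) The $B_\circ$-path that absorbs a subtour charge has length at least $\lfloor(n+1)/2\rfloor$ (otherwise, together with the vertices outside $V_\circ$ and the remaining paths, one could build a larger basis of $A$ contradicting minimality, after the reduction in Step~1). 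Averaging the subtour charge over that many edges of $B_\circ$ is exactly what yields the $-\lfloor(n+1)/2\rfloor^{-1}$ correction to $p$.

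The main obstacle is the tight book-keeping in the third step: showing both that the charges can always be arranged so that every edge of $B_\circ$ carries total weight at most $p$, and that the ``long path'' lower bound $\lfloor(n+1)/2\rfloor$ in observation~(b) is in fact forced by the minimality of $B_\circ$. I would follow Jenkyns' original argument in~\cite{TAJ:79} to carry out this accounting, and verify tightness on an extremal configuration (e.g.\ two nearly equal simple cycles in $A$ sharing a single vertex) to confirm that the constant $p=2-\lfloor(n+1)/2\rfloor^{-1}$ cannot be improved.
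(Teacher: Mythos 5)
First, note that the paper does not prove this lemma at all---it is imported verbatim from Jenkyns~\cite{TAJ:79}---so there is no in-paper argument to compare against; your proposal has to stand on its own. As written it does not: the entire charging/book-keeping step, which you yourself identify as ``the main obstacle,'' is deferred back to the external reference, and the one substantive structural claim you add to explain where the $\floor{\frac{n+1}{2}}^{-1}$ correction comes from---observation~(b)---is both unjustified and false. Your stated justification (``otherwise \ldots one could build a larger basis of $A$ contradicting minimality'') is logically backwards: $B_\circ$ is a \emph{minimum-cardinality} basis, so exhibiting a \emph{larger} basis contradicts nothing. And the claim itself fails: a minimum basis may consist of several short vertex-disjoint paths, each of whose endpoint pairs is joined by an edge of $A$ (e.g.\ on $n=9$ vertices take $B_\circ$ to be two paths $1\text{--}2\text{--}3\text{--}4$ and $5\text{--}6\text{--}7\text{--}8$ with $A$ containing the chords $\{1,4\}$ and $\{4+1,8\}$ plus edges into the internal vertices); nothing about global minimality forces an individual chorded path to have $\floor{\frac{n+1}{2}}$ edges. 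So the mechanism you propose for extracting the precise constant does not work.

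The correct route is simpler and does not average a subtour charge over a long path. With $B_\circ$ a union of $r\ge 1$ paths having $m$ edges total (the case $B_\circ$ a Hamiltonian cycle gives $m=n\ge M$ and is trivial), maximality of $B_\circ$ forces every edge of $B^\star$ either to touch one of the $m-r$ internal vertices of $B_\circ$ (each of which has degree at most $2$ in the independent set $B^\star$), or to be the unique chord joining the two endpoints of some path of $B_\circ$, or to be a single-edge path of $B_\circ$ itself; the last two classes contribute at most one edge per path. Hence $M\le 2(m-r)+r=2m-r\le 2m-1$. Now combine this with the trivial bound $M\le n$ and split on $m$: if $m\le K:=\floor{\frac{n+1}{2}}$ then $M\le 2m-1\le 2m-\frac{m}{K}=\bigl(2-\frac{1}{K}\bigr)m$, while if $m>K$ then $\bigl(2-\frac{1}{K}\bigr)m\ge\bigl(2-\frac{1}{K}\bigr)(K+1)=2K+1-\frac{1}{K}>n\ge M$. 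This is where the $\floor{\frac{n+1}{2}}^{-1}$ term actually comes from---a case split between the counting bound $M\le 2m-1$ and the size bound $M\le n$---not from a lower bound on the length of any particular chorded path. Your tight example is also off: the extremal configuration (for odd $n$) is a single maximal path of $\floor{\frac{n+1}{2}}$ edges inside an $A$ that also contains a Hamiltonian cycle, not two cycles sharing a vertex.
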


\section{Problem Formulation}
\label{sec:problem}
Given a complete graph $G = (V,E,w)$, where $w$ is a submodular rewards function
that has a curvature of $\kappa$,
we are interested in analysing simple algorithms to find a
Hamiltonian tour that has the maximum reward
The specific situation we look at is
\begin{align}
\label{eqn:maxTour}
&\max_{S \in {\cal H}} w(S).
\end{align}
In Section~\ref{sec:incCosts}, we will briefly discuss the problem of where costs are
incorporated into the optimization problem.

In the following sections, we look at two methods of approximately finding the
optimal tour according to \eqref{eqn:maxTour}.

\section{A Simple Greedy Strategy}
\label{sec:greedy}
A greedy algorithm to construct the TSP is given in
Algorithm~\ref{alg:greedyTour}.
The idea is to pick the edge that will give the largest marginal benefit
at each iteration.
The selected edge
cannot cause the degree of any vertex to be more than 2 nor create any subtours.
If it fails either criteria, the edge is discarded from future consideration.

\begin{theorem}
\label{thm:greedyTourBounds}
The complexity of the greedy tour algorithm (Alg.~\ref{alg:greedyTour})
is $O(|V|^3(f + \log |V|))$, where $f$ is the runtime of the oracle,
and is a $\frac{1}{2 + \kappa}$ approximation.
\end{theorem}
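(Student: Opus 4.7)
The plan is to derive both claims of the theorem by plugging into the general results already established in the paper, namely the curvature-dependent greedy bound on $p$-systems (Theorem~\ref{thm:greedycurvsubmodpsystem}) together with the structural lemma that the undirected TSP is a $p$-system with $p<2$ (Lemma~\ref{lem:psysTSP}).

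For the approximation factor, I would observe that Algorithm~\ref{alg:greedyTour} is precisely the general greedy procedure of Algorithm~\ref{alg:generalGreedy} applied to the undirected TSP independence system: at each step it picks the edge of largest marginal reward and permanently discards any candidate that would cause a vertex to have degree greater than two or close a premature subtour. By Lemma~\ref{lem:psysTSP}, this independence system is a $p$-system with $p = 2 - \lfloor (n+1)/2 \rfloor^{-1} < 2$. Theorem~\ref{thm:greedycurvsubmodpsystem} then yields an approximation factor of $\frac{1}{p+\kappa}$, which is strictly larger than $\frac{1}{2+\kappa}$. This step is essentially bookkeeping.

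For the running time, the key observation is that a Hamiltonian tour contains exactly $|V|$ edges, so $S$ is updated at most $|V|$ times over the whole execution. Between consecutive updates of $S$ the marginals $\Delta(e) = w(S \cup \{e\}) - w(S)$ are unchanged, so rather than recomputing them inside every pass of the outer loop it suffices to refresh them only after $S$ grows. Each refresh performs $O(|V|^2)$ oracle calls, one for every edge not in $S$, and then sorts the resulting marginals in $O(|V|^2 \log |V|)$ time. Between refreshes the algorithm walks down the sorted list and tests feasibility of each top candidate in near-constant time, by maintaining a vertex-degree counter (degree check) and a union-find structure on the partial-path components (subtour check). Aggregating over the $|V|$ refresh phases gives $O(|V|^3 f + |V|^3 \log |V|) = O(|V|^3 (f + \log |V|))$.

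The main subtlety I anticipate is justifying the ``one refresh per addition to $S$'' accounting: one must argue that skipping an infeasible top-ranked edge does not require any fresh oracle call or re-sorting. This is immediate because the marginal values depend only on $S$ and not on the set of remaining candidates, so the ordering of the list stays valid until $S$ itself is modified. Once that is in place, both halves of the theorem follow directly by combining Lemma~\ref{lem:psysTSP} with Theorem~\ref{thm:greedycurvsubmodpsystem}.
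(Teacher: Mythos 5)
Your proposal is correct and follows essentially the same route as the paper: the approximation factor is obtained by combining Lemma~\ref{lem:psysTSP} with Theorem~\ref{thm:greedycurvsubmodpsystem}, and the runtime by amortizing the $O(|V|^2)$ oracle recalculations and re-sorting over the $|V|$ additions to the tour. The only cosmetic difference is that you use a union-find structure for the subtour check where the paper's algorithm uses DFS (the paper notes the disjoint-set alternative in a remark), but in both cases this term is dominated by the recalculation cost and the bound is unchanged.
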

\begin{proof}
By Lemma~\ref{lem:psysTSP} and Theorem~\ref{thm:greedycurvsubmodpsystem},
Algorithm~\ref{alg:greedyTour} is a $\frac{1}{2+\kappa}$-approximation of
\eqref{eqn:maxTour}.

The calculation and selection of the element of maximum marginal benefit (line~\ref{algln:recalcLoop})
requires calculating the marginal benefit for each edge in $E \setminus M$.
Note that recalculation of the marginal benefits need only be done when the set $M$ is changed.
Since only one edge is added to the tour $M$ at each update in line~\ref{algln:updateSetM},
recalculation only needs to take place a total of $|V|$ times.
In addition, the edges will need to be sorted every time a recalculation is performed 
so that future calls to find the maximum benefit element can be done in constant time
(if no recalculation is needed).
Therefore, the recalculation and sorting take $O(|V|(|E|f + |E|\log |E|))$.
The runtime for DFS is $O(|V|+|E|)$, but the DFS in run using
only the edges picked so far, so its runtime becomes $O(|V|)$.
Assuming all the other commands take $\Theta(1)$ time,
the total runtime is $O(|V|(|E|f + |E|\log|E|) + |E|(|V|+1))$.
For a complete graph, $|E| = O(|V|^2)$ and therefore
the runtime becomes 
$O(|V|^3f + 2|V|^3 \log |V| + |V|^3)$.
\end{proof}
\begin{remark}
A more efficient data structure would be to use disjoint-sets
for the vertices.
Each set will represent a set of vertices that are in the same subtour.
This gives a total runtime of $|V|^2 \log |V|$ following
the analysis in \cite[Ch.\ 21,23]{THC-CEL-RLR-CS:01}.
Unfortunately, the ``recalculation" part still dominates
the total runtime and so the resulting bound is the same.
\oprocend
\end{remark}

\begin{algorithm2e}
\KwIn{Graph $G = (V,E)$. Function oracle $w:2^E \to \real_{\geq 0}$}
\KwOut{Edge set $M$ corresponding to a tour.}
$M \leftarrow \emptyset$\;
$vDeg \leftarrow {\bf 0}_{1 \times |V|}$\;
$reCalc \leftarrow true$\;
\While{$E \neq \emptyset$ {\bf and} $|M| < |V|$}
{
	$e_m \leftarrow \argmax_{e \in E} \rho_e$\; \label{algln:recalcLoop}
	$\set{u,v} \leftarrow V[e_m]$\;
	
	\tcp{Check if edge is valid}
	$addEdge \leftarrow (vDeg[u] < 2$ {\bf and} $vDeg[v] < 2)$\;
	\If{$addEdge$ {\bf and} $|M| < |V| - 1$}
	{
		\tcp{Check for potential subtour}
		Run DFS on $G_T = (V,M)$ starting at vertex $u$\;
		\lIf{vertex $v$ is hit}
		{
			$addEdge \leftarrow false$\;
		}
	}
	
	\If{addEdge}
	{
		$S \leftarrow M \cup \set{e_m}$\; \label{algln:updateSetM}
		Increment $vDeg[u]$ and $vDeg[v]$\;
	}
	$E \leftarrow E \setminus \set{e_m}$\;
}
\Return M\;
\caption{Greedy algorithm for TSP}
\label{alg:greedyTour}
\end{algorithm2e}

Motivated by the reliance of the bound on the curvature, in the next section we will consider
a method to obtain improved bounds for functions with a lower curvature.

\section{2-Matching based tour}
\label{sec:matching}

Another approach to finding the optimal basis of an undirected TSP set system
is to first relax the ``no subtours" condition.
The set system defined by the independence condition that each vertex can have a
degree at most 2 is in fact just a simple 2-matching.
As before, finding the optimal 2-matching for a submodular function is a NP-hard problem.
We discuss two methods to approximate a solution. The first is a greedy approach and the 
second is by using a linear relaxation of the submodular function.
We will see that the bounds with linear relaxation will be better than the greedy
approach for certain values of curvature.

\subsection{Greedy 2-Matching}
One way to approximate the solution to the problem of maximizing a submodular function
to finding a maximum 2-matching is to use a greedy approach.

\begin{algorithm2e}
\KwIn{Graph $G = (V,E)$. Function oracle $f:2^E \to \real_{\geq 0}$}
\KwOut{A simple 2-matching $M$}
$M \leftarrow \emptyset$\;
$vDeg \leftarrow {\bf 0}_{1 \times |V|}$\;
$reCalc \leftarrow true$\;
\While{$E \neq \emptyset$}
{
	$e_m \leftarrow \argmax_{e \in E} \rho_e$\;
	$\set{u,v} \leftarrow V[e_m]$\;
	
	\tcp{Check if edge is valid}
	\If{$vDeg[u] < 2$ {\bf and} $vDeg[v] < 2$}
	{
		$S \leftarrow M \cup \set{e_m}$\;
		Increment $vDeg[u]$ and $vDeg[v]$\;
	}
	$E \leftarrow E \setminus \set{e_m}$\;
}
\Return M\;
\caption{Greedy algorithm for Maximal Matching}
\label{alg:greedyMatching}
\end{algorithm2e}

\begin{theorem}
\label{thm:greedyMatchingBounds}
The complexity of the greedy matching algorithm (\ref{alg:greedyMatching})
is $O(|V|^3(f + \log |V|))$, where $f$ is the runtime of the oracle.
The greedy approach is a $\frac{1}{2+\kappa}$-approximation.
\end{theorem}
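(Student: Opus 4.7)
The plan is to reduce this theorem to results already established in the paper, namely Mestre's theorem (simple $b$-matchings form a $2$-extendible system) and Theorem~\ref{thm:greedycurvsubmodpsystem} (greedy on a $p$-system with curvature-$\kappa$ submodular objective achieves $\frac{1}{p+\kappa}$). The algorithm's feasibility check, namely $vDeg[u] < 2$ and $vDeg[v] < 2$, accepts exactly those edge subsets in which every vertex has degree at most $2$, so the set $M$ returned is a simple $2$-matching. Thus the underlying independence system is precisely the simple $b$-matching system with $b \equiv 2$.

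First I would invoke Mestre's theorem to conclude that this system is $2$-extendible, hence a $2$-system. The algorithm is exactly the general greedy procedure of Algorithm~\ref{alg:generalGreedy} specialized to this independence system and to the submodular reward $w$ with curvature $\kappa$. Applying Theorem~\ref{thm:greedycurvsubmodpsystem} with $p=2$ then immediately yields the approximation factor $\frac{1}{2+\kappa}$.

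For the complexity, I would mirror the runtime analysis of Theorem~\ref{thm:greedyTourBounds}, but with the subtour check removed. Because each accepted edge increases the sum of vertex degrees by exactly $2$ and the total degree sum cannot exceed $2|V|$, at most $|V|$ edges are ever added to $M$, so the marginal benefits need to be recomputed at most $|V|$ times. Each recomputation costs $O(|E|f)$ oracle calls and $O(|E|\log|E|)$ for re-sorting the priority structure, giving $O(|V|(|E| f + |E|\log|E|))$. Every edge is examined and then discarded at most once, contributing $O(|E|)$ overhead. Substituting $|E|=O(|V|^2)$ for the complete graph yields the claimed $O(|V|^3(f+\log|V|))$ bound.

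There is no real obstacle here beyond correctly identifying the independence structure; the mild subtlety is simply checking that the algorithm's greedy selection rule, together with the absence of the subtour test, matches the generic greedy template used by Theorem~\ref{thm:greedycurvsubmodpsystem}, and that the bookkeeping cost estimates of Theorem~\ref{thm:greedyTourBounds} carry over unchanged once the DFS term is dropped.
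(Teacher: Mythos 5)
Your proposal matches the paper's proof essentially step for step: the paper likewise cites Mestre's result that a simple $2$-matching is a $2$-extendible system (hence a $2$-system) and applies Theorem~\ref{thm:greedycurvsubmodpsystem} with $p=2$ for the $\frac{1}{2+\kappa}$ factor, and its runtime analysis is the same $O(|V|(|E|f+|E|\log|E|))$ recalculation argument with the DFS term dropped. No gaps; this is the intended argument.
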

\begin{proof}
Similar to the greedy tour, picking the edge of maximum benefit 
requires requires recalculation of all the marginal benefits 
and only need to be done $|V|$ times.
So picking the best edge requires $O(|V|(|E|f + |E|\log |E|))$ time.
All other parts require $\Theta(1)$ time per iteration
(for $|E|$ iterations).
Therefore, the total runtime is $O(|V|^3f+ 2|V|^3\log |V|)$.

Since a simple 2-matching is a 2-extendible system,
the greedy solution will be within $\frac{1}{2+\kappa}$ of the optimal
by Theorem~\ref{thm:greedycurvsubmodpsystem}.
\end{proof}

\subsection{Maximum 2-Matching Linear Relaxation}
For a linear objective function, the problem of finding a maximum weight
2-matching can be formulated as a binary integer program.
Let $x = \set{x_{ij}}$ where $1 \leq i < j \leq |V|$ and let
each edge be assigned a real positive weight given by $\tilde{w}_{ij}$.
Define $E(x)$ as the set of edges for which $x_{ij} = 1$.
Then the maximum weight 2-matching, $(V,E(x))$, can be obtained by solving
\begin{align*}
&\max \sum_{i=1}^{|V|-1} \sum_{j>i} \tilde{w}_{ij}x_{ij}
\\
\text{s.t. }
&\sum_{j>i} x_{ij} + \sum_{j<i} x_{ji} = 2, \quad \forall i \in \set{1,\ldots,|V|}
\\
&x_{ij} \in \set{0,1}, \quad 1 \leq i < j \leq |V|.
\end{align*}

This method, however, does not have any good bounds on runtime.
Alternatively, for a weighted graph the maximum weight 2-matching can be found in
$O(n^3)$ time \cite{BK-JV:07} via an extension of Edmonds' Maximum Weighted
Matching algorithm.

For our original problem with \eqref{eqn:maxTour} as the objective
function for the maximization, the two methods described here can obviously not
be applied directly.
Therefore, we define a linear relaxation $\tilde{w}$ of the submodular function
$w$ as follows,
\begin{align}
\label{eqn:lnrRelaxWeights}
\tilde{w}(S) = \sum_{e\in S}w(e) = \sum_{e \in S} \Delta_\emptyset(e),
\quad \forall S \subseteq E.
\end{align}
In other words, we give each edge a value that is the maximum possible
marginal benefit that it can have.
Using this relaxation, the optimal 2-matching based on the weights $\tilde{w}$
can be calculated.

\begin{theorem}
\label{thm:matchingLnrApprox}
Let $M_1$ be the maximum 2-matching for a submodular rewards function $w$ and
let $M_2$ be the maximum weight 2-matching using $\tilde{w}$ as the edge weights.
If $w$ has an independence system curvature of $\kappa_I$, then
\[
w(M_2) \geq (1-\kappa_I) w(M_1).
\]
\end{theorem}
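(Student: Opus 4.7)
The plan is to chain together three inequalities that relate $w$, $\tilde{w}$, $M_1$, and $M_2$, so that $w(M_2)$ is lower-bounded by $(1-\kappa_I) w(M_1)$.

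\textbf{Step 1 (key step, uses curvature).} I would first show that
$w(M_2) \geq (1-\kappa_I)\tilde{w}(M_2)$.
To do this, enumerate the edges of $M_2$ in an arbitrary order as $e_1, \ldots, e_m$ and write $w(M_2)$ as the telescoping sum $\sum_{t=1}^m \Delta_{T_{t-1}}(e_t)$, where $T_t := \{e_1, \ldots, e_t\}$. Since the simple $2$-matchings form an independence system (in fact a $2$-extendible system), each $T_{t-1}$ is independent and $T_{t-1} \cup \{e_t\}$ is independent, so the independence system curvature hypothesis applies and gives $\Delta_{T_{t-1}}(e_t) \geq (1-\kappa_I) w(e_t)$. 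Summing yields $w(M_2) \geq (1-\kappa_I) \sum_{e \in M_2} w(e) = (1-\kappa_I)\tilde{w}(M_2)$.

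\textbf{Step 2 (optimality of $M_2$ under $\tilde{w}$).} Because $\tilde{w}$ is the linear relaxation of $w$ and $M_2$ is defined as the maximum $\tilde{w}$-weight $2$-matching, we have $\tilde{w}(M_2) \geq \tilde{w}(M_1)$.

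\textbf{Step 3 (submodularity gives subadditivity).} Finally, $\tilde{w}(M_1) = \sum_{e \in M_1} w(e) \geq w(M_1)$, since a nonnegative submodular function with $w(\emptyset)=0$ is subadditive (this follows by applying the submodular inequality inductively as each edge is adjoined). Chaining the three bounds gives the desired $w(M_2) \geq (1-\kappa_I) w(M_1)$.

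The only delicate point I anticipate is Step 1: one must be careful that the curvature bound $\Delta_A(e) \geq (1-\kappa_I) w(e)$ is available at every stage of the telescoping, which is exactly why the \emph{independence system} version of curvature $\kappa_I$ (rather than the general $\kappa$) is needed. Since every prefix $T_{t-1}$ of a $2$-matching is itself a $2$-matching, and appending $e_t$ keeps it so, the hypothesis applies verbatim. The remaining steps are just optimality and subadditivity, which are routine.
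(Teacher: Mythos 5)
Your proposal is correct and follows essentially the same argument as the paper: a telescoping expansion of $w(M_2)$ bounded below term-by-term via the independence-system curvature, followed by the optimality of $M_2$ under $\tilde{w}$ and the subadditivity bound $\tilde{w}(M_1)\geq w(M_1)$. The only difference is cosmetic: you make explicit the (correct) observations that every prefix of a $2$-matching is independent and that subadditivity needs $w(\emptyset)=0$, both of which the paper leaves implicit.
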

\begin{proof}
The definition of curvature states that $\Delta_S(e) \geq (1-\kappa_I)w(e)$ for any
independent subset $S$ of the edges and $e$ such that $S \cup \{e\}$ is also
independent.
Using this and the definition of submodularity,
\begin{align*}
w(M_2) &= \Delta_\emptyset(e_1) +\Delta_{\{e_1\}}(e_2) + 
\Delta_{\{e_1,e_2\}}(e_3) +\ldots
\\
& \geq (1-\kappa_I) \sum_{e \in M_2}w(e).
\\
& = (1-\kappa_I) \tilde{w}(M_2).
\end{align*}
Since $\tilde{w}(M_2)$ is maximum, $\tilde{w}(M_2) \geq \tilde{w}(M_1)$.
Therefore,
\begin{align*}
w(M_2) \geq (1-\kappa_I)\tilde{w}(M_1)
\geq (1-\kappa_I)w(M_1),
\end{align*}
due to decreasing marginal benefits.
Note that this bound also holds using the standard definition of curvature.
\end{proof}

\subsection{Reduced 2-Matching}
The output of either of the two algorithms described will be a basis of the 2-matching
system. Once a maximal 2-matching has been obtained,
it needs to be converted into a tour.
The edge set corresponding to the 2-matching can be divided into a collection 
of disjoint sets.
These sets will either be subtours or simple paths.
Any simple path can be at most one edge in length
since otherwise its endpoints could be joined together (as the graph is complete)
contradicting the maximality of the matching.
In addition, only one of the disjoint sets will correspond to a simple path and it will contain either one edge or a single vertex.
Therefore, a maximal 2-matching will consist of a collection of vertex
disjoint subtours and at most one extra edge.

In order to convert the maximal 2-matching to a tour, the subtours will have to be 
broken by removing an edge from each one. The remaining set of simple paths
will then need to be connected up.

We first give a result on efficiently finding a subset to remove from a set while 
maintaining $\frac{2}{3}$ of the value. We then give an algorithm to reduce a set 
of subtours starting from a maximal 2-matching.

\subsubsection{Removing elements from a set}
Given a set $S$  and a $m$-partition of the set $\set{A^i}_{i=1}^m$,
i.e. $S= \bigcup_{i=1}^m A^i$
and that $A^i \cap A^j = \emptyset$ for all $i \neq j$.
Each part contains $n_i$ elements, $A^i = \bigcup_{j=1}^{n_i} a_j^i$,
such that $3 \leq n_i \leq N$.
Let $k = \min_i n_i$ and let $\bar{n} = (n_1, \ldots, n_m)$.
Also given a monotone non-decreasing submodular function $f:2^S \to \real_{\geq 0}$.
Let $A_{-j}^i := A^i \setminus a_{j}^i$.
\begin{theorem}
\label{thm:existsSmallSubset}
Given set $S$ and disjoint subsets $A^i, i = 1,\ldots,m$, where $k$ is the size of the 
smallest subset $A^i$, as defined above.
There exists a set $R$ of $m$ elements (to which each set $A^i$ contributes exactly one element)
such that $f(R)\geq (1 - \frac{1}{k}) f(S) \geq \frac{2}{3}f(S)$.
\end{theorem}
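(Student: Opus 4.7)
I read the statement as the following claim about the reduced set $R:=\bigcup_{i=1}^{m}A_{-j_i}^{i}$ formed by deleting one ``contributed'' representative $a_{j_i}^i$ from each part $A^i$: there exist indices $j_1,\ldots,j_m$ so that $f(R)\geq(1-1/k)f(S)$, with the second inequality $(1-1/k)\geq 2/3$ following from $k\geq 3$. This is the only reading compatible with the direction of the inequality (since $f$ is monotone and the $A^i$ have size $\geq 3$, an $m$-element subset of $S$ cannot generally recover most of $f(S)$) and it matches the intended use in the following subsection, where $R$ is the set of retained edges after breaking each subtour.

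The plan hinges on a \textbf{one-element averaging lemma}: if $g$ is any monotone non-negative submodular function on a ground set $B$ of size $n$, then some $b\in B$ satisfies $g(B\setminus b)\geq(1-1/n)g(B)$. I would prove it by summing $g(B)-g(B\setminus b)=\Delta_{B\setminus b}(b)$ over $b$, fixing any enumeration $b_1,\ldots,b_n$, and using decreasing marginals (since $\{b_1,\ldots,b_{j-1}\}\subseteq B\setminus b_j$) to bound $\Delta_{B\setminus b_j}(b_j)\leq\Delta_{\{b_1,\ldots,b_{j-1}\}}(b_j)$. The right-hand side telescopes to $g(B)-g(\emptyset)\leq g(B)$, and averaging picks out a $b$ whose removal costs at most $g(B)/n$.

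The main argument is induction on $m$. The base $m=1$ is just the lemma applied to $f$ on $A^1=S$. For the inductive step I would set $S':=S\setminus A^m$ and apply the hypothesis to the $(m-1)$-partition $A^1,\ldots,A^{m-1}$ of $S'$ (whose smallest part still has size $\geq k$) to obtain $R'\subseteq S'$ with $f(R')\geq(1-1/k)f(S')$. The key step is then to apply the averaging lemma not to $f$ itself on $A^m$, but to the \emph{residual} function $g(B):=f(S'\cup B)-f(S')$, which is monotone, non-negative, and submodular on $A^m$ because $S'$ and $A^m$ are disjoint. This produces an index $j_m$ with $\Delta_{S'}(A_{-j_m}^m)\geq(1-1/n_m)\Delta_{S'}(A^m)\geq(1-1/k)\Delta_{S'}(A^m)$.

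Setting $R:=R'\cup A_{-j_m}^m$ and using $R'\subseteq S'$ together with decreasing marginals, one has $\Delta_{R'}(A_{-j_m}^m)\geq\Delta_{S'}(A_{-j_m}^m)$, so $f(R)\geq f(R')+(1-1/k)\Delta_{S'}(A^m)\geq(1-1/k)\bigl[f(S')+\Delta_{S'}(A^m)\bigr]=(1-1/k)f(S)$, closing the induction. The main obstacle I anticipate is resisting the naive sequential bound $f(S)-f(R)\leq\sum_i[f(A^i)-f(A^i\setminus a_{j_i}^i)]$, which would only yield $\sum_i f(A^i)/k$; for submodular $f$ one only has $\sum_i f(A^i)\geq f(S)$, pushing the bound the wrong way. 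Routing the induction through the residual $g$ on $A^m$ replaces $f(A^m)$ by $\Delta_{S'}(A^m)$, and the latter does telescope cleanly against $f(S)$.
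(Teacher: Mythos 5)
Your proof is correct, and your reading of the statement---that the guarantee is really about the retained set obtained by deleting one element from each part, not about the $m$ deleted elements themselves---is the right one: it matches the paper's own proof and the phrasing of Corollary~\ref{cor:existsSmallSubset}. Your route, however, is genuinely different from the paper's. The paper works with the $k$ ``diagonal'' transversals $B_i=\bigcup_{j=1}^m a_i^j$ (the $i$th element of every part, for $i=1,\ldots,k$): it telescopes $f(\bigcup_{i=1}^k B_i)$ over the $B_i$, lower-bounds each telescoping term by the last marginal $\Delta_{S\setminus B_i}(B_i)$ via submodularity, concludes $\sum_{i=1}^k \Delta_{S\setminus B_i}(B_i)\leq f(S)$ (Lemma~\ref{lem:existsSmallSubset}), and finishes by pigeonhole: some $B_i$ costs at most $f(S)/k$ to remove. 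You instead induct on the number of parts and average over single-element removals \emph{within} each part, applied to the residual function $g(B)=f(S'\cup B)-f(S')$; that residual trick is exactly what sidesteps the subadditivity trap $\sum_i f(A^i)\geq f(S)$ that you correctly flag. Both arguments are instances of ``last marginals of disjoint sets sum to at most the total value,'' applied along orthogonal axes of the same $k\times m$ array of elements. What each buys: the paper's version only ever needs to test $k$ aligned candidate removal sets, which is precisely what Algorithm~\ref{alg:reduceSet} exploits ($O(k)$ oracle calls); your version lets each part choose its own index independently and actually yields the sharper per-part bound $f(S)-f(R)\leq\sum_{i=1}^m \frac{1}{n_i}\Delta_{A^1\cup\cdots\cup A^{i-1}}(A^i)\leq\frac{1}{k}f(S)$, which improves on $(1-\frac{1}{k})f(S)$ when the parts have unequal sizes, at the price of an algorithm that would have to examine every element of every part.
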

\begin{proof}
\newcommand{\veci}{\mathbf{i}}
A selection of one element from each set can be
given by the vector $p \in \Int_+^m, p\leq \bar{n}$.
Let  $b_p = f(S) - f(\bigcup_{i=1}^m A_{-p_i}^i)$ be the unique contribution of the selected
elements to the total reward of $S$.
For convenience of notation, define $\veci := (i,i,\ldots,i) \in \real^m$.

The following lemma will help show the desired result.
\begin{lemma}
\label{lem:existsSmallSubset}
$\sum_{i=1}^k b_{\veci} \leq f(S)$.
\end{lemma}
\begin{proof}
The basic argument is that $b_{\veci}$ is the minimum possible contribution
of the set $\bigcup_j a_i^j$. So the total contribution over different $i$ will be
less than (or equal to) the sum of their minimum contributions.

Let $B_i =  \bigcup_{j=1}^m a_{i}^j$ be the set obtained by selecting the $i$th element from each set.

Using the definition of marginal benefit $\Delta_S(X)$,
we can write
$f(T) = \Delta_\emptyset(B_1) + \Delta_{B_1}(B_2) + \ldots + \Delta_{T \setminus B_k}(B_k)$.

However, by submodularity,
$b_{\veci} = \Delta_{S \setminus B_i}(B_i) \leq \Delta_X(B_i), \forall X \subseteq S\setminus B_i$.
Therefore, $f(T) \geq \sum_{i=1}^k \Delta_{S \setminus B_i}(B_i)$.

Combining this with the fact that $T \subseteq S \implies f(T) \leq f(S)$ by monotonicity, we get the desired result.
\end{proof}
To prove the theorem statement,
assume that there does not exist any set with the desired property, i.e.
$\forall p \in \Int_+^m, p\leq \bar{n}$ we have
$f(\bigcup_{i=1}^m A_{-p_i}^i) < (1 - \frac{1}{k}) f(S)$.
From this assumption, we can see that $b_p > \frac{1}{k}f(S)$.
Therefore, $\sum_{i=1}^k b_{\veci} > f(S)$.
With Lemma~\ref{lem:existsSmallSubset} the desired result is obtained by contradiction.

For the second part of the inequality, since $k = \min n_i \geq 3$,
so $(1-\frac{1}{k}) \geq \frac{2}{3}$.
\end{proof}

\begin{corollary}
\label{cor:existsSmallSubset}
Given a set $S$ such that $|S| \geq mk$, then 
there exists a subset $T$ of $m$ elements such that
$f(S\setminus T) \geq (1 - \frac{1}{k})f(S)$.
\end{corollary}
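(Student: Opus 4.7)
The plan is to reduce the corollary to Theorem~\ref{thm:existsSmallSubset} by manufacturing a suitable $m$-partition of $S$ out of thin air. The theorem gives us, for any partition of $S$ into $m$ disjoint parts of size at least $k$ each, a selection of one element per part whose removal costs at most a $\frac{1}{k}$ fraction of $f(S)$. The corollary only asks for a subset $T$ of $m$ elements with this property, so it is enough to exhibit any valid partition and feed it into the theorem.

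First I would construct the partition. Since $|S| \geq mk$, enumerate the elements of $S$ as $s_1,\ldots,s_{|S|}$ and set $A^i := \{s_{(i-1)k+1}, \ldots, s_{ik}\}$ for $i = 1,\ldots,m-1$, and $A^m := S \setminus \bigcup_{i=1}^{m-1} A^i$. Then $|A^i| = k$ for $i < m$ and $|A^m| \geq k$, so every $n_i \geq k$ and $\min_i n_i = k$. (If $k < 3$, pad the parts up to size $\max(k,3)$ trivially---or simply note that the inequality $(1-1/k) \geq 2/3$ is not being claimed by the corollary, only the $(1-1/k)$ bound, whose proof in Theorem~\ref{thm:existsSmallSubset} never actually uses $n_i \geq 3$.) Applying Theorem~\ref{thm:existsSmallSubset} to $(S, A^1,\ldots,A^m, f)$ with this partition produces indices $p \in \Int_+^m$, $p \leq \bar{n}$, such that $T := \{a^i_{p_i} : i = 1,\ldots,m\}$ satisfies
\[
f(S \setminus T) = f\bigl(\bigcup_{i=1}^m A^i_{-p_i}\bigr) \geq \Bigl(1 - \frac{1}{k}\Bigr) f(S),
\]
which is exactly the desired conclusion, with $|T| = m$.

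There is no real obstacle here; the only subtlety is reading Theorem~\ref{thm:existsSmallSubset} correctly, since its conclusion as stated mentions $f(R)$ but its proof actually controls $f(S \setminus R)$ via the quantity $b_p = f(S) - f(\bigcup_i A^i_{-p_i})$. The corollary aligns with the proof (rather than the literal statement), so the reduction is immediate once the partition above is in place.
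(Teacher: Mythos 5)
Your proof is correct and takes essentially the same route as the paper's own one-line argument: partition $S$ into $m$ disjoint parts of size at least $k$ each (possible since $|S| \geq mk$) and invoke Theorem~\ref{thm:existsSmallSubset}. Your additional remarks---that the $n_i \geq 3$ hypothesis is only needed for the $\frac{2}{3}$ clause and not for the $(1-\frac{1}{k})$ bound, and that the theorem's conclusion should be read as controlling $f(S \setminus R)$ rather than $f(R)$---are accurate and, if anything, more careful than the paper's proof.
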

\begin{proof}
Since $|S| \geq mk$, $S$ can be divided into at least $m$ subsets
of size $\geq k$ each.
Creating such a division means that $S$ can be represented as
$\bigcup_{i=1}^m A^i$ such that $|A^i| \geq k, \forall i$.
Therefore, the result of Theorem~\ref{thm:existsSmallSubset} applies.
\end{proof}

\subsubsection{Algorithm to remove one element per set}
Based on the above results for existence of a set that can be removed
while maintaining at least $\frac{2}{3}$ of the original value,
we introduce a simple technique (given in Algorithm~\ref{alg:reduceSet})
to find such a set by searching over a finite number of disjoint sets.

\begin{algorithm2e}
\KwIn{$S = \bigcup_{i=1}^m A^i$
where $A^i = \bigcup_{j=1}^{n_i} a_j^i$}
\KwOut{$U \subset S$
s.t. $U\cap A^i = 1$ for all $i = 1 \ldots m$}
$i \leftarrow 1$;  $k \leftarrow \min_j |A^j|$\;
$U := \bigcup_{j=1}^m a^j_i$\;
\While{$f(S\setminus U) < \frac{k-1}{k}f(S)$}
{
	$i \leftarrow i + 1$\;
	$U := \bigcup_{j=1}^m a^j_i$\;
}
\Return $U$\;
\caption{\textsc{ReduceSet}$(S,A^1,\ldots,A^m)$}
\label{alg:reduceSet}
\end{algorithm2e}

\begin{theorem}
\label{thm:correctSmallSubset}
Algorithm~\ref{alg:reduceSet} is correct.
\end{theorem}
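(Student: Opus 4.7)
The plan is to verify correctness by two steps: (i) every returned $U$ has exactly one element in each part $A^j$, and (ii) the while loop terminates within at most $k$ iterations, so each diagonal index $i$ encountered is a valid subscript in $A^j = \bigcup_{j'=1}^{n_j} a^j_{j'}$ (recall $k = \min_j |A^j|$). Part (i) is immediate from inspection: at every pass through the loop, $U$ is reassigned to $\bigcup_{j=1}^m a^j_i$, which by construction picks exactly one element from each part, so whenever the algorithm returns, the structural output specification is automatically satisfied.

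For part (ii), I would argue by contradiction, directly invoking Lemma~\ref{lem:existsSmallSubset}. For $i = 1,\ldots,k$ let $U_i := \bigcup_{j=1}^m a^j_i$, which is well-defined since $k \leq n_j$ for every $j$. In the notation of that lemma, $b_{\mathbf{i}} = f(S) - f(\bigcup_{j=1}^m A^j_{-i}) = f(S) - f(S \setminus U_i)$. Suppose, toward contradiction, that the loop-exit condition fails at every $i \in \{1,\ldots,k\}$; then for each such $i$ we would have $f(S \setminus U_i) < \frac{k-1}{k} f(S)$, i.e., $b_{\mathbf{i}} > \frac{1}{k}f(S)$, and summing over $i$ would yield $\sum_{i=1}^k b_{\mathbf{i}} > f(S)$, directly contradicting Lemma~\ref{lem:existsSmallSubset}. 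Therefore some $i^* \leq k$ triggers the exit, and the returned $U_{i^*}$ satisfies both the structural requirement and the value bound $f(S \setminus U_{i^*}) \geq \frac{k-1}{k} f(S)$.

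The conceptual subtlety worth flagging is that the algorithm scans only the $k$ ``diagonal'' selections $U_1,\ldots,U_k$ rather than the exponentially many vectors $p \leq \bar{n}$ indexing one-element-per-part choices; this is sufficient precisely because Lemma~\ref{lem:existsSmallSubset} was phrased as a sum over exactly these diagonals. I do not anticipate any real obstacle: the correctness of \textsc{ReduceSet} is essentially a direct computational corollary of the existential statement already established in Theorem~\ref{thm:existsSmallSubset}.
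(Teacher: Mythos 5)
Your proposal is correct and follows essentially the same route as the paper: the paper proves an intermediate lemma (its Lemma~\ref{lem:correctSmallSubset}) by exactly the contradiction you give---assuming all $k$ diagonal selections fail the exit test, summing the contributions $b_{\mathbf{i}}$, and contradicting Lemma~\ref{lem:existsSmallSubset}---and then observes that the algorithm scans precisely these $k$ disjoint diagonal choices. Your additional remarks on structural output correctness and termination within $k$ iterations only make explicit what the paper leaves informal.
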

To prove correctness, it will help to establish the following result.
\begin{lemma}
\label{lem:correctSmallSubset}
Given a set $S$ composed of $m$ disjoint subsets as defined above.
Then there exists $i \in \set{1,\ldots,k}$ such that 
$f(\bigcup_{j=1}^m A^j_{-i}) \geq (1-\frac{1}{k}) f(S)$.
\end{lemma}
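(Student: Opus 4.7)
The plan is to derive this lemma as a direct consequence of Lemma~\ref{lem:existsSmallSubset}. First, for each $i \in \{1, \ldots, k\}$ I would set $B_i := \bigcup_{j=1}^m a_i^j$, which collects the $i$-th element of each part $A^j$, and observe the identity $S \setminus B_i = \bigcup_{j=1}^m A^j_{-i}$. Hence it suffices to exhibit some $i^* \in \{1,\ldots,k\}$ for which $f(S \setminus B_{i^*}) \geq (1 - 1/k)\, f(S)$.

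Next, I would specialize the notation $b_p$ from the proof of Theorem~\ref{thm:existsSmallSubset} to the uniform selections $p = (i,i,\ldots,i)$. Under this specialization, $b_{\mathbf{i}} = f(S) - f(S \setminus B_i)$. Lemma~\ref{lem:existsSmallSubset} then gives $\sum_{i=1}^{k} b_{\mathbf{i}} \leq f(S)$. A straightforward averaging (pigeonhole) argument produces an index $i^* \in \{1,\ldots,k\}$ with $b_{\mathbf{i^*}} \leq f(S)/k$. Substituting back, $f(S \setminus B_{i^*}) = f(S) - b_{\mathbf{i^*}} \geq (1 - 1/k)\, f(S)$, which together with the identification in the first step is exactly the inequality required.

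The main obstacle is essentially nonexistent: the real work was already carried out inside Lemma~\ref{lem:existsSmallSubset}, which bounded the sum of the uniform-selection marginal contributions. All that remains here is to convert ``bounded average'' into ``small at some specific index,'' and to check the set-theoretic identification $S \setminus B_i = \bigcup_{j=1}^m A^j_{-i}$. In fact, the contradiction argument in the proof of Theorem~\ref{thm:existsSmallSubset} already implicitly used exactly such a uniform witness, so this lemma is best viewed as stating that intermediate fact in a direct (rather than contrapositive) form, which is what is needed to certify that the loop in Algorithm~\ref{alg:reduceSet} terminates within $k$ iterations with a valid $U$.
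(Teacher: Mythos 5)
Your proposal is correct and rests on exactly the same ingredients as the paper's own proof: the identification $S \setminus B_i = \bigcup_{j=1}^m A^j_{-i}$ and the bound $\sum_{i=1}^k b_{\mathbf{i}} \leq f(S)$ from Lemma~\ref{lem:existsSmallSubset}. The only difference is presentational --- you extract the witness $i^*$ directly by averaging, whereas the paper phrases the same pigeonhole step as a proof by contradiction.
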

\begin{proof}
The proof follows a similar logic as the proof of Theorem~\ref{thm:existsSmallSubset}.

Assume that $\forall i \in \set{1,\ldots,k}$, 
$f(\bigcup_{j=1}^m A^j_{-i}) < \frac{k-1}{k} f(S)$.
So $\forall i \in \set{1,\ldots,k}$,
$f(S) - f(\bigcup_{j=1}^m A^j_{-i}) > \frac{1}{k} f(S)$.
Which implies
$\sum_{i=1}^k
f(S) - f(\bigcup_{j=1}^m A^j_{-i}) > \frac{k}{k} f(S) = f(S)$.
But we know that
$f(S) \geq \sum_{i=1}^k f(S) - f(\bigcup_{j=1}^m A^j_{-i})$
(by Lemma~\ref{lem:existsSmallSubset}).
Therefore, the desired result is shown by contradiction.
\end{proof}

\begin{proof}[Proof of Theorem~\ref{thm:correctSmallSubset}]
The algorithm (randomly) selects one element to remove from each set.
If the removal means more than $\frac{1}{k}$ of the reward is lost,
another set of elements is chosen that is disjoint from the previously chosen one.

Since each set $A^j$ contains atleast $k$ elements, there are $k$ possible
disjoint sets to choose from (for removal).
Lemma~\ref{lem:correctSmallSubset} states that one of these is guaranteed to have the desired property
of lowering the objective by at most $\frac{1}{k}$.
\end{proof}

\begin{theorem}
The complexity of Algorithm~\ref{alg:reduceSet} is $O(kf) = O(|S|f)$.
\end{theorem}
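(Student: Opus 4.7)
The plan is to bound the number of iterations of the while loop and multiply by the per-iteration cost; both pieces follow directly from the setup of Algorithm~\ref{alg:reduceSet} together with Lemma~\ref{lem:correctSmallSubset}.

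First I would bound the number of iterations. Lemma~\ref{lem:correctSmallSubset} provides an index $i^\star \in \{1, \ldots, k\}$ for which $f(\bigcup_{j=1}^m A^j_{-i^\star}) \geq \tfrac{k-1}{k}\, f(S)$. Because the algorithm tests $i = 1, 2, \ldots$ in order and halts the first time the guard fails, it must stop no later than the iteration at which $i = i^\star$. In particular, the loop guard is evaluated at most $k$ times.

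Next I would account for the per-iteration cost. The value $f(S)$ need only be computed once, outside the loop. Each pass then constructs $U = \bigcup_{j=1}^m a^j_i$ in $O(m)$ time and makes a single value-oracle call to evaluate $f(S\setminus U)$ at cost $f$. Taking the oracle as the dominant term gives $O(f)$ per iteration and $O(kf)$ in total. For the second equality in the statement, I would note that $|S| = \sum_{j=1}^m |A^j| \geq m \cdot \min_j |A^j| = mk \geq k$, so $O(kf) \subseteq O(|S|f)$.

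I do not expect a genuine obstacle here. The only point requiring care is to cite Lemma~\ref{lem:correctSmallSubset} as the termination certificate, since the algorithm would otherwise be free to attempt to access $a_{k+1}^j$, which is undefined whenever some $|A^j| = k$. That the loop always exits by $i = k$ is precisely the content of that lemma, and once termination is in hand the rest is routine bookkeeping.
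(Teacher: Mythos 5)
Your argument is correct. Note that the paper states this complexity theorem without any proof; the termination certificate you invoke, Lemma~\ref{lem:correctSmallSubset}, is exactly the ingredient the authors use in their correctness proof (Theorem~\ref{thm:correctSmallSubset}), and your accounting --- at most $k$ evaluations of the loop guard, one oracle call each, with $f(S)$ computed once and $k \le |S|$ giving the second bound --- is precisely the routine bookkeeping they left implicit. Your remark that the lemma also rules out ever indexing $a_{k+1}^j$ is a correct and worthwhile observation that the paper does not make explicit.
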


\subsubsection{Algorithm to delete edges}
We can now use Algorithm~\ref{alg:reduceSet} to remove one edge from each subtour in a matching.  

The subtours, $T^i$, can be considered as a collection of disjoint 
edge sets. Each subtour will consist of atleast three edges.
Since we want to remove one element from each set
while trying to maximize a submodular reward function,
the results of Theorem~\ref{thm:existsSmallSubset} apply and 
so we know that there will
exist a solution such that at most $\frac{1}{3}$ of the 
value of the objective is lost.

If there exists an extra edge not part of any subtour,
it does not affect the result since,
following from Corollary~\ref{cor:existsSmallSubset},
it can just be considered as part of one of the other subtours.
Since any $k$ of the elements of any
subtour are needed for the algorithm, these $k$ can be chosen to not include
the extra edge.
Outlined in Algorithm~\ref{alg:reduceMatching} is an method
that will find such a set of edges to remove.

\begin{algorithm2e}
\KwIn{A 2-matching $G_M = (V,M)$ where 
$M = \bigcup_{i=1}^m T^i$ and the sets $T^i$ are the subtours.}
\KwOut{$R \subset M$ such that each subtour has one edge removed.}
$n \leftarrow 0$\;
\For{$i = 1 \ldots m$}
{
	\If{$|T^i| > 1$}
	{
		$A^n = T^i$\;
		Increment $n$\;
	}
}
\Return  \textsc{reduceSet}$(M,A^1,\ldots,A^n)$\;
\caption{Reduce Matching}
\label{alg:reduceMatching}
\end{algorithm2e}

\begin{theorem}
\label{thm:reduceMatchingBound}
Algorithm~\ref{alg:reduceMatching} correctly reduces the matching while
maintaining $\frac{2}{3}$ of the original value.
The complexity of the algorithm is $O(kf) = O(|V|f)$
\end{theorem}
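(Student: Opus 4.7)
The plan is to decompose the claim into a correctness statement and a complexity statement, then discharge each separately by reduction to results that have already been established.

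For correctness, I would first observe that every subtour $T^i$ in a simple 2-matching is a simple cycle and therefore contains at least three edges. The filter in the algorithm ($|T^i|>1$) is there to discard the at-most-one ``extra'' edge that may appear outside all subtours, as noted in the paragraph preceding the algorithm. Consequently, the sets $A^1,\ldots,A^n$ actually handed to \textsc{reduceSet} each have size at least $3$, which gives $k=\min_j|A^j|\geq 3$. Applying Theorem~\ref{thm:correctSmallSubset} to $S=M$ with the partition $A^1,\ldots,A^n$ yields a set $R\subset M$ containing exactly one element of each $A^j$ with $f(M\setminus R)\geq \frac{k-1}{k}f(M)\geq \frac{2}{3}f(M)$, which is precisely the claim of the first sentence of the theorem.

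The only subtle point is what happens when an extra edge $e$ is present but not included in any $A^j$. Here I would invoke Corollary~\ref{cor:existsSmallSubset}: we are free to view $e$ as absorbed into any one subtour, say $T^1$, forming an enlarged partition of $M$. Since every $|T^i|\geq k$, the first $k$ indexed ``slices'' used by \textsc{reduceSet} can be chosen so that the extra edge $e$ is always the $(k+1)$-th element of the enlarged $T^1$ and hence is never selected for removal. Thus the reward guarantee transfers without forcing us to remove $e$, and removal of exactly one edge per subtour breaks all cycles in $M$ as required.

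For complexity, I would simply bound the two phases. Constructing the lists $A^1,\ldots,A^n$ is a single pass over the subtours, which takes $O(|V|)$ time and no oracle calls, since a 2-matching contains at most $|V|$ edges. The call to \textsc{reduceSet} is then the dominant cost; by the complexity theorem already proved for Algorithm~\ref{alg:reduceSet}, this is $O(kf)$, and since $k\leq |V|$ we get the stated $O(|V|f)$ bound. The main obstacle I anticipate is phrasing the extra-edge argument cleanly enough that the reader sees why the $2/3$ guarantee survives despite the extra edge being excluded from the input partition; beyond that, every ingredient is a direct invocation of a previously proved result.
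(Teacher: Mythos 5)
Your proposal is correct and follows essentially the same route as the paper: the paper states this theorem without a formal proof environment, but the justification given in the preceding prose is exactly your argument --- each subtour has at least three edges so $k\geq 3$, Theorem~\ref{thm:correctSmallSubset} (via Lemma~\ref{lem:existsSmallSubset}) gives the $\frac{k-1}{k}\geq\frac{2}{3}$ guarantee, and the lone extra edge is absorbed into a subtour and indexed so it is never among the $k$ candidate slices. Your complexity accounting likewise matches the paper's stated $O(kf)=O(|V|f)$ bound for \textsc{reduceSet} plus a linear pass to form the subtour lists.
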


\subsection{Tour using matching algorithm}
\begin{figure}
\centering
\includegraphics[width=0.98\linewidth]{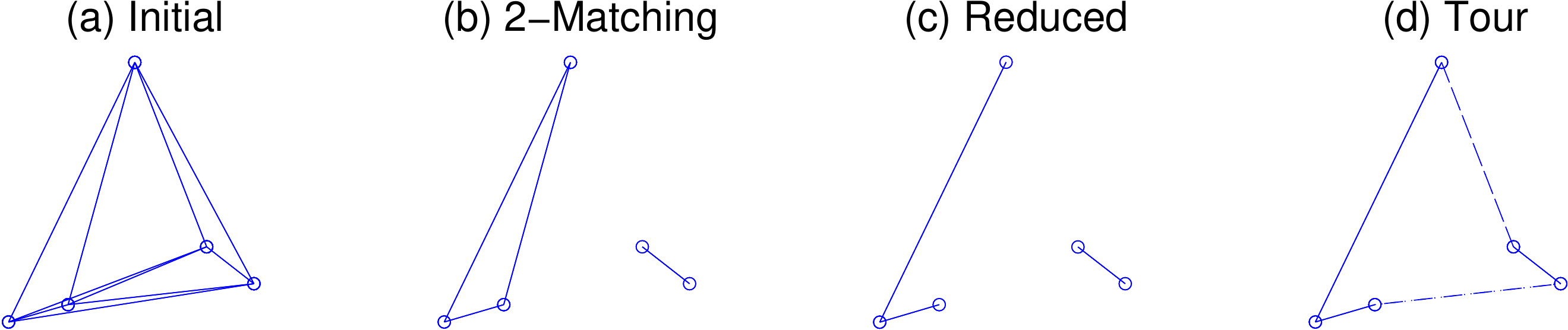}
\caption{The steps in the 2-matching based tour algorithm}
\label{fig:matchingSteps}
\end{figure}
We now present an outline of the complete 2-matching tour algorithm.
The steps are illustrated in Figure~\ref{fig:matchingSteps}.
\begin{enumerate}
\item Run Algorithm~\ref{alg:greedyMatching} to get a
simple 2-matching, $M_1$.
Using the linear relaxation $\tilde{w}$ of $w$, solve for the maximum
weight 2-matching, $M_2$.
From $M_1$ and $M_2$, choose the 2-matching that has a higher reward.
\item Find all sets of subtours.
(using, for example, DFS).
\item Run Algorithm~\ref{alg:reduceMatching} to select edges
to remove.
\item Connect up the reduced subtours into a tour.
One method to accomplish this would be by finding all the vertices that have
degree of 0 or 1 and then arbitrarily connecting them up to complete the tour.
\end{enumerate}

\begin{theorem}
\label{thm:matchingBoundRatio}
The 2-matching tour algorithm gives a 
$\max\set{\frac{2}{3(2+\kappa)},\frac{2}{3}(1-\kappa)}$-approximation
in $O(f(n^3+n) + n^3 \log n)$ time.
\end{theorem}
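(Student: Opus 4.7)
The plan is to chain together the approximation guarantees already established in the excerpt for each stage of the algorithm, using monotonicity to conclude that the final ``stitching'' step cannot reduce the reward. Let $T^\star$ denote an optimal tour and $M^\star$ an optimal simple 2-matching for the reward function $w$. Since every tour is itself a feasible 2-matching, $w(M^\star)\geq w(T^\star)$, so it suffices to bound the output relative to $w(M^\star)$.

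For Step 1, I would combine the two matching bounds. By Theorem~\ref{thm:greedyMatchingBounds}, the greedy matching satisfies $w(M_1)\geq \frac{1}{2+\kappa}\,w(M^\star)$. By Theorem~\ref{thm:matchingLnrApprox} (and the remark that the bound also holds with the standard curvature $\kappa$), the linear-relaxation matching satisfies $w(M_2)\geq (1-\kappa)\,w(M^\star)$. Taking the better of the two yields a 2-matching $M$ with $w(M)\geq \max\bigl\{\tfrac{1}{2+\kappa},\,1-\kappa\bigr\}\,w(T^\star)$.

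For Steps 2 and 3, I would invoke the structural observation preceding Algorithm~\ref{alg:reduceMatching}: a maximal simple 2-matching decomposes into vertex-disjoint simple subtours (each of length at least three) together with at most one leftover edge. Theorem~\ref{thm:reduceMatchingBound} then guarantees that Algorithm~\ref{alg:reduceMatching} selects an edge set $R$ containing one edge per subtour such that $w(M\setminus R)\geq \tfrac{2}{3}\,w(M)$. For Step 4, the set $M\setminus R$ consists of vertex-disjoint simple paths which can be chained into a Hamiltonian tour $T$ by adding edges between the remaining degree${}\leq 1$ vertices (the graph is complete, so this is always possible). By monotonicity of $w$, $w(T)\geq w(M\setminus R)\geq \tfrac{2}{3}\,w(M)$, giving the claimed approximation ratio $\max\bigl\{\tfrac{2}{3(2+\kappa)},\,\tfrac{2}{3}(1-\kappa)\bigr\}$.

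For the complexity, I would simply add up the costs already established: Algorithm~\ref{alg:greedyMatching} takes $O(n^3(f+\log n))$ by Theorem~\ref{thm:greedyMatchingBounds}; the weighted 2-matching subroutine for $M_2$ runs in $O(n^3)$ via the extension of Edmonds' algorithm cited in the excerpt; identifying subtours by DFS is $O(n)$ on the matching; Algorithm~\ref{alg:reduceMatching} is $O(nf)$ by Theorem~\ref{thm:reduceMatchingBound}; and the final stitching is $O(n)$. Summing gives $O\!\bigl(f(n^3+n)+n^3\log n\bigr)$. The only subtle point to be careful about is that the ``extra edge'' case in the maximal 2-matching is handled by Corollary~\ref{cor:existsSmallSubset} (absorbing it into one of the subtours when selecting the edges to remove), so the $\tfrac{2}{3}$ loss bound still applies; beyond this, the proof is a mechanical composition of the preceding results.
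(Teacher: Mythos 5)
Your proposal is correct and follows essentially the same route as the paper's own proof: bound the chosen 2-matching against the optimal 2-matching (which dominates the optimal tour) via Theorems~\ref{thm:greedyMatchingBounds} and~\ref{thm:matchingLnrApprox}, lose at most a factor of $\frac{2}{3}$ in the reduction step by Theorem~\ref{thm:reduceMatchingBound}, use monotonicity for the final stitching, and sum the component runtimes. The only difference is that you make the handling of the leftover edge via Corollary~\ref{cor:existsSmallSubset} explicit, which the paper relegates to the discussion preceding Algorithm~\ref{alg:reduceMatching}.
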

\begin{proof}
The matching is a $\max\set{\frac{1}{2+\kappa},1-\kappa}$-approximation from
Theorems~\ref{thm:greedyMatchingBounds} and \ref{thm:matchingLnrApprox}.
Given that a tour is a special case of the maximum simple 2-matching,
the maximum tour has a value less than or equal to the optimal 2-matching.
Removing edges from the matching produces a subgraph that is
atleast $\frac{2}{3}$ of the original value of the 2-matching 
(Theorem~\ref{thm:reduceMatchingBound}).
Adding more edges to complete the tour will only increase the value.
Therefore, the resulting tour is within
$\max\set{\frac{2}{3(2+\kappa)},\frac{2}{3}(1-\kappa)}$ of the optimal tour.
Note that the bound given is relative to the optimal 2-matching and not to 
the optimal tour so the actual bound may be better.

The first step involves building the matching. The greedy matching takes 
$O(|V|^3(f + \log |V|))$ time and the linear relaxation approximation
takes $O(|V|^3)$ time.
Finding the edges in all the components is
$O(|V|+|E|) = O(|V|)$ since the number of edges in a matching
is at most $|V|$.
Removing the edges is $O(|V|f)$.
Assuming it is easy to find the free vertices,
connecting up the final graph is $O(m) = O(|V|)$ where $m$
is the number of subtours and ranges from $1$ to $\floor{\frac{|V|}{3}}$.
Therefore, the total runtime is
$O(n^3 (f + \log n) + n(2+f)) = O(f(n^3+n) + n^3 \log n)$.
\end{proof}

The reason for calculating the 2-matching twice is now explained.
The greedy method gives a $\frac{1}{2}$-approximation
in the case of a linear objective (Lemma~\ref{lem:greedylnrpsystem}).
A similar method of using a matching is used in \cite{MLF-GLN-LAW:79} for a linear reward function.
In that case, the approximation ratio achieved is $\frac{2}{3}$ of the optimal tour.
The reason for this
is that for a linear function, an optimal perfect matching is obtained
and the bound depends only on how much is lost in removing the edges.
We showed that a similar bound limiting the loss can be obtained for the submodular case; 
however, just using the greedy approximation for the 2-matching,
we cannot guarantee as good a result since the final tour would be within
$\frac{2}{3}\frac{1}{2}=\frac{1}{3}$  of the optimal.
By also using the second method of finding the 2-matching, our resulting 
bound for the final tour in the case of a linear function improves to $\frac{2}{3}$.
\begin{remark}
For any value of $\kappa < \frac{1}{2}(\sqrt{3}-1) \approx 0.366$,
using the linear relaxation method
to construct a 2-matching and then converting it into a tour, gives a better bound
with respect to the optimal tour than by using the greedy tour approach.
\oprocend
\end{remark}

\begin{remark}
In our case, the $\frac{2}{3}$ loss is actually a worst case bound where all the subtours are
composed of 3 edges.
For a graph with a large number of vertices, the subtours will probably be larger and so 
$k$ may be larger than 3. This would lead to a better bound for the algorithm.
\oprocend
\end{remark}
\begin{remark}
In removing the edges we used an algorithm to quickly find a ``good" set of edges to
remove but made no effort to look for the ``best" set. Using a more intelligent heuristic we would
get better results (of course at the cost of a longer runtime).

Also, the last step where the reduced subtours are connected into a tour can be achieved using various
different techniques. As mentioned above, one possibility is to just arbitrarily
 connect up the components.
Another method would be to use a greedy approach (so running Algorithm~\ref{alg:greedyTour} except
with an initial state).
This would not change the worst case runtime 
given in Theorem~\ref{thm:matchingBoundRatio}.
Alternatively, if the number of subtours is found to be small, an exhaustive search 
going through all the possibilities could be performed.
\oprocend
\end{remark}

\section{Extension to directed graphs}
\label{sec:directedExtension}
The algorithms described can also be applied to directed graphs yielding
approximations for the ATSP.

\subsection{Greedy Tour}
For the greedy tour algorithm, a slight modification needs to be made to check
that the in-degree and out-degree of the vertices are less than $1$ instead of
checking for the degree being less than $2$.
Since the ATSP is a 3-extendible system, the approximation of the greedy algorithm
changes to $\frac{1}{3+\kappa}$ instead of $\frac{1}{2+\kappa}$ as in the undirected case.

\subsection{Tour using Matching}
Instead of working with a 2-matching, the system can be modelled as the intersection
of two partition matroids:
\begin{itemize}
\item Edge sets such that the indegree of each vertex $\leq 1$.
\item Edge sets such that the outdegree of each vertex $\leq 1$.
\end{itemize}
This system is still 2-extendible and so the approximation for the greedy 2-matching
does not change.
For the second approximation, the Maximum Assignment Problem (max AP) can be solved
by representing the weights in \eqref{eqn:lnrRelaxWeights} as a weight matrix
$\tilde{W}$ where we set $\tilde{W}_{ii} = -\infty$.
The Hungarian algorithm, that has a complexity of $O(n^3)$, can be applied
to obtain an optimal solution. Therefore, the result of Theorem~\ref{thm:matchingLnrApprox}
still applies.

The result of the greedy algorithm or the solution to the assignment problem 
will be a set of edges that together form
a set of cycles, with the possibility of a lone vertex.
Note that a ``cycle" could potentially consist of just two vertices.
Therefore, removing one edge from each cycle will result in a loss of at most
$\frac{1}{2}$ instead of $\frac{1}{3}$. This follows directly from the analysis
in Theorem~\ref{thm:existsSmallSubset} but using $k=2$ instead of $k=3$.
The final bound for the algorithm is therefore
$\max\set{\frac{1}{2(2+\kappa)},\frac{1}{2}(1-\kappa)}$.

\section{Incorporating Costs}
\label{sec:incCosts}
Often times, optimization algorithms have to deal with multiple objectives.
In our case, we can consider the tradeoff between the reward of a set and its associated cost.
A number of algorithms presented in literature look at attempting to maximize the benefit given a ``budget" or a bound on the cost,
i.e. find a tour T such that
\[
T \in \argmax_{S \in \mathcal{H}} w(S) \text{ s.t. } c(S) \leq k.
\]
This involves maximizing a monotone non-decreasing submodular function over
a knapsack constraint as well as an independence system constraint.

We will work with a different form of the objective function 
defined by a weighted combination of the reward and cost.
For a given value of $\beta \in[0,1]$, solve
\begin{align}
T &\in \argmax_{S \in \mathcal{H}} f(S) \\
f(S,\beta) &= (1-\beta) w(S) - \beta c(S), \label{eqn:combObj1}
\end{align}
where the combined objective is a non-monotone possibly negative submodular function.
An advantage of having this 
form for the objective is that the ``cost trade-off" is being incorporated directly into 
the value being optimized.  Since the cost function is modular, maximizing the negative of 
the cost is equivalent to minimizing the cost.  So the combined objective tries to 
maximise the reward and minimize the cost at the same time. 

The parameter $\beta$ is used as a weighting mechanism.
The case of $\beta = 0$ corresponds to ignoring costs and
that of $\beta=1$ corresponds to ignoring rewards and just minimizing the cost
(this would just be the traditional TSP).

One minor issue with the proposed function is that the rewards and costs may have
different relative magnitudes.
This might mean that $\beta$ is biased to either 0 or 1.
Normalizing the values of $w$ and $c$ will help bring both the values down
to a similar scale.
This gives the advantage of being able to use $\beta$ as an unbiased tuning parameter.

Therefore, the final definition of the objective function is
\begin{align}
f(S) = \frac{1-\beta}{M_w}w(S) - \frac{\beta}{M_c}c(S)
\label{eqn:combObj}
\end{align}
where $M_c = \max_{S}{c(S)}$ and $M_w = \max_{S}{w(S)}$.
The exact values of $M_c$ and $M_w$ may be hard to calculate
and so could be approximated.

To address the issue of the function being non-monotone and negative,
consider the alternative modified cost function
\[
c'(S) = |S|M - c(S), \quad M = \max_{e \in E} c(e).
\]
This gives the following form for the objective function,
\begin{align}
f'(S,\beta) &= (1-\beta) w(S) + \beta c'(S) \label{eqn:combObj2} \\
&= (1-\beta) w(S) + \beta (|S|M - c(S)), \nonumber \\
&= f(S,\beta) + \beta|S|M, \nonumber
\end{align}
which is a monotone non-decreasing non-negative submodular function.
This has the advantage of offering known approximation bounds.
The costs have in a sense been ``inverted" and so maximizing $c'$ still corresponds to minimizing 
the cost $c$.
\begin{remark}
Instead of using $|S|M$ as the offset, we could use the sum of the $|S|$ 
largest costs in $E$. This would not improve the worst case bound but
may help to improve results in practice.
\oprocend
\end{remark}

\begin{lemma}
\label{lem:newFuncOrder}
For any two sets $S_1$ and $S_2$,
if $f'(S_1) \geq \alpha f'(S_2)$ then $f(S_1) \geq \alpha f(S_2) + M( \alpha|S_2| - |S_1|)$
\end{lemma}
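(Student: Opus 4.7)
The plan is to prove this lemma by a one-line algebraic rewriting using the identity already established right after the definition of $c'$, namely
\[
f'(S,\beta) \;=\; f(S,\beta) + \beta |S| M,
\]
which follows immediately from $c'(S) = |S|M - c(S)$. Since the hypothesis $f'(S_1) \geq \alpha f'(S_2)$ is just an inequality between the two ``shifted'' objectives, the natural approach is to substitute this identity on both sides and solve for $f(S_1)$.

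Carrying this out, I would first write $f'(S_1) = f(S_1) + \beta|S_1|M$ and $f'(S_2) = f(S_2) + \beta|S_2|M$, plug both into $f'(S_1) \geq \alpha f'(S_2)$, and then isolate $f(S_1)$ on the left. Rearranging gives $f(S_1) \geq \alpha f(S_2) + \beta M\bigl(\alpha |S_2| - |S_1|\bigr)$, which is exactly the stated conclusion (with the $\beta$ absorbed into the constant $M$, or obtained by treating the fixed weighting $\beta$ as part of the problem data).

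There is essentially no obstacle here; the lemma is a bookkeeping step. The purpose of flagging it as a separate result is clear from context: the subsequent sections will apply standard monotone submodular approximation machinery (such as the $\frac{1}{p+\kappa}$ bound of Theorem~\ref{thm:greedycurvsubmodpsystem}) to $f'$, which is monotone, non-negative, and submodular, and will then need to translate those guarantees back to the true non-monotone objective $f$. This lemma supplies the translation, and the additive term $\beta M(\alpha|S_2| - |S_1|)$ will serve as the precise accounting of the ``inversion loss'' introduced by replacing $c$ with $c'$, which depends only on the cardinalities of the candidate tour $S_1$ and the comparison set $S_2$ (typically the optimum).
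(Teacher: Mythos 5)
Your proof is correct and is essentially identical to the paper's: both substitute the identity $f'(S) = f(S) + |S|M$ into the hypothesis and rearrange to isolate $f(S_1)$. Your remark about the stray $\beta$ is apt — the paper's own proof silently drops it as well, consistent with the later use of the lemma in Theorem~\ref{thm:approxBoundwCost} where the offset is written as $|S|M$ with no weighting factor.
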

\begin{proof}
\begin{align*}
&f'(S_1) \geq \alpha f'(S_2)
\\
\implies
&f(S_1) +|S_1|M \geq \alpha (f(S_2) +|S_2|M)
\\
\implies
&f(S_1) \geq \alpha f(S_2) + M ( \alpha|S_2| -|S_1| )
\end{align*}
\end{proof}

\begin{remark}
\label{rem:newFuncOrder}
As a special case, if $|S_1| = |S_2|$,
then $f(S_1) > f(S_2)$ if and only if $f'(S_1) > f'(S_2)$.
Therefore, it can be deduced that over all sets of the same size,
the one that maximizes (\ref{eqn:combObj1}) is the same one that maximizes (\ref{eqn:combObj2}).
\oprocend
\end{remark}

\begin{theorem}
\label{thm:combObjResult}
Let $G_1$ be the greedy solution obtained maximizing \eqref{eqn:combObj1}
and $G_2$ be the greedy solution maximizing \eqref{eqn:combObj2}.
Then $G_1 = G_2$.
\end{theorem}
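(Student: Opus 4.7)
The plan is to argue by induction on the greedy iteration that the two algorithms select identical edges in identical order. The crucial observation is that the difference $f'(S) - f(S) = \beta|S|M$ depends only on the cardinality of $S$, not on which edges $S$ contains. Consequently, when we compute the marginal contribution of a candidate edge $e$ to the current set $S$, we obtain
\[
\Delta'_S(e) = f'(S\cup\{e\}) - f'(S) = f(S\cup\{e\}) - f(S) + \beta M = \Delta_S(e) + \beta M.
\]
Since $\beta M$ is a constant that does not depend on the candidate edge $e$, the two marginal-benefit functions $\Delta_S$ and $\Delta'_S$ induce the same ordering on the edges available at step $t$, and hence $\argmax_{e} \Delta_S(e) = \argmax_{e}\Delta'_S(e)$ (under any consistent tie-breaking rule).

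With this identity in hand, I would set up the induction. The base case is immediate, both algorithms start from $S_0 = \emptyset$. For the inductive step, assume that after $t$ iterations both greedy procedures have produced the same partial edge set $S_t$. Since the independence system (the tour constraint) is the same in both problems, the set of candidate edges $\{e \notin S_t : S_t\cup\{e\} \in \mathcal{F}\}$ considered by each algorithm in iteration $t+1$ is identical. By the identity above, the edge that maximizes $\Delta'_{S_t}$ over this candidate set is the same as the one that maximizes $\Delta_{S_t}$, so both procedures append the same edge and we obtain $S_{t+1}^{(1)} = S_{t+1}^{(2)}$.

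Finally, because the stopping condition in Algorithm~\ref{alg:greedyTour} depends only on the structure of the partial tour and the exhaustion of the edge set, not on the numerical value of $f$ or $f'$, both runs terminate after the same number of iterations with the same final edge set, giving $G_1 = G_2$.

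I do not anticipate a substantive obstacle here: the entire argument rests on the fact that the offset $\beta|S|M$ is a function of cardinality alone, which is exactly the content of Remark~\ref{rem:newFuncOrder} specialized to the single-edge increments performed by the greedy algorithm. The only minor care needed is to note that this telescoping applies at every step, not just to the final sets, so any intermediate choice made by the two algorithms is forced to agree.
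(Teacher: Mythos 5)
Your argument is correct and is essentially the paper's own proof, just written out in more detail: the paper likewise observes that at iteration $i$ both greedy runs compare sets of the same cardinality $i+1$, so by the cardinality-only offset $\beta|S|M$ (Remark~\ref{rem:newFuncOrder}) they select the same element each time. Your explicit computation $\Delta'_S(e) = \Delta_S(e) + \beta M$ and the induction merely make the paper's one-line reasoning rigorous.
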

\begin{proof}
At each iteration $i$ of the greedy algorithm, we are finding the element that will
give the maximum value for a set of size $i+1$.
Since comparison is being done between sets of the same size,
the same element will be chosen at each iteration.
\end{proof}

\begin{theorem}
\label{thm:approxBoundwCost}
Consider a function $f = f_1 - f_2$, where $f_1$ is submodular and $f_2$ is modular,
and a $p$-system $(E,{\cal F})$.
An $\alpha$-approximation to the problem $\max_{S \in {\cal F}} f'(S)$,
where $f'(S) = f(S) + |S|M, M = \max_{e\in E} f_2(e)$,
corresponds to an approximation of $\alpha OPT - \big( 1 + \alpha - 2\frac{\alpha}{p} \big)Mn$, for the problem $\max_{S \in {\cal F}} f(S)$,
where $n$ is the size of the maximum cardinality basis.
\end{theorem}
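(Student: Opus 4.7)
Let $S^{*}\in\argmax_{S\in{\cal F}}f(S)$ denote an optimum of the original problem, so $OPT = f(S^{*})$; let $S'^{*}\in\argmax_{S\in{\cal F}}f'(S)$ denote an optimum of the monotone surrogate; and let $G$ be the given $\alpha$-approximation to $f'$, so $f'(G)\geq \alpha f'(S'^{*})$. Since $f'$ is monotone nondecreasing and nonnegative, both $S'^{*}$ and (without loss of generality) $G$ can be taken to be bases, so the $p$-system property forces $n/p\leq |G|,|S'^{*}|\leq n$; I will also treat $S^{*}$ as a basis, which matches the intended setting in which every feasible solution is a tour.

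The plan is to use the identity $f'(S) = f(S)+|S|M$ to pass between the two objectives twice: once at $G$, and once at the optimizers. First I would convert the approximation guarantee into a statement about $f$:
\[
f(G) \;=\; f'(G) - |G|M \;\geq\; \alpha\bigl(f(S'^{*}) + |S'^{*}|M\bigr) - |G|M,
\]
and then apply $|S'^{*}|\geq n/p$ together with $|G|\leq n$ to obtain the intermediate estimate $f(G)\geq \alpha f(S'^{*}) - \bigl(1 - \tfrac{\alpha}{p}\bigr)nM$. To replace $f(S'^{*})$ by $OPT$, I would invoke the optimality of $S'^{*}$ for $f'$, namely $f'(S'^{*})\geq f'(S^{*})$, which rearranges as $f(S'^{*})\geq OPT + \bigl(|S^{*}|-|S'^{*}|\bigr)M$; plugging in $|S^{*}|\geq n/p$ and $|S'^{*}|\leq n$ gives $f(S'^{*}) \geq OPT - \bigl(1-\tfrac{1}{p}\bigr)nM$. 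Combining the two estimates and regrouping the constants via the identity
\[
\alpha\bigl(1-\tfrac{1}{p}\bigr) + \bigl(1-\tfrac{\alpha}{p}\bigr) \;=\; 1 + \alpha - \tfrac{2\alpha}{p}
\]
yields the claimed bound $f(G)\geq \alpha OPT - (1+\alpha - 2\alpha/p)Mn$.

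The main obstacle is recognizing that the $p$-system cardinality inequality must be invoked twice, in opposite directions: once to bound $|G|-\alpha|S'^{*}|$, and once, routed through $S'^{*}$, to bound $|S^{*}|-|S'^{*}|$. A more direct single-step comparison of $|G|$ with $|S^{*}|$ inside $f(G)\geq \alpha OPT + (\alpha|S^{*}|-|G|)M$ only produces the weaker constant $(1-\alpha/p)$, so the key trick is to filter the argument through the surrogate optimum $S'^{*}$. A secondary delicacy is the implicit assumption that $S^{*}$ is a basis: for a general $p$-system the maximizer of $f$ need not be maximal (since $f$ is neither monotone nor nonnegative), so this either has to be built into the hypothesis or, as in the TSP tour application motivating the result, justified by the problem's constraint structure.
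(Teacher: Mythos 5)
Your argument is correct and is essentially the paper's own proof: both pass through the surrogate optimum $S'^{*}$ (the paper's $T$), apply the identity $f'(S)=f(S)+|S|M$ once at the approximation step and once at the optimality comparison $f'(S'^{*})\geq f'(S^{*})$, and invoke the $p$-system cardinality ratio to bound the resulting $M$-terms by $(1-\alpha/p)nM$ and $\alpha(1-1/p)nM$ respectively. The only differences are cosmetic (you bound the basis sizes by $n/p$ and $n$ directly rather than via the pairwise ratio), and your observation that the maximizer of the non-monotone $f$ must be assumed to be a basis is a fair caveat that the paper leaves implicit.
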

\begin{proof}
Let $S$ be the solution obtained by a $\alpha$-approximation algorithm to $f'$.
Let $T$ be the optimal solution using $f'$.
Let $Z$ be the optimal solution using $f$.
Note the following inequality:
\begin{align*}
|A| \leq |B|p
\implies 
\alpha|B| -|A| \geq |B|(\alpha - p) \geq |A|(\frac{\alpha}{p} - 1).
\end{align*}
By using the property of $p$-systems that for any two bases $A$ and $B$,
\[
\frac{|A|}{|B|} \leq p,
\]
and by Lemma~\ref{lem:newFuncOrder}, we have
\[
f'(S) \geq \alpha f'(T) \implies f(S) \geq \alpha f(T) + M\left(\alpha |T| -|S|\right).
\]
So,
\begin{align*}
f(S) &\geq \alpha f(T) - M|T| \left( p - \alpha \right)
\\
&\geq \alpha f(T) - M|S| \left( 1-\frac{\alpha}{p} \right)
\\
&\geq \alpha f(T) - Mn \left( 1-\frac{\alpha}{p} \right).
\end{align*}
Also,
\[
f'(T) \geq f'(Z) \implies f(T) \geq f(Z) - M|T|\left( 1 - \frac{1}{p}\right).
\]
Substituting,
\begin{align*}
f(S)
&\geq \alpha f(Z) - \alpha M|T|\left( 1 - \frac{1}{p}\right) - Mn \left( 1 - \frac{\alpha}{p} \right)
\\
&\geq \alpha f(Z) - Mn \left( 1 + \frac{p-2}{p}\alpha \right).
\end{align*}
\end{proof}

\begin{remark}
In the special case of a 1-system (this includes matroids), or more generally
any problem where the output to the algorithm will always be the same size,
we have $|S| = |T| = |Z|$ and also $T = Z$ following from Remark~\ref{rem:newFuncOrder}.
This means that an algorithm that gives a relative error of $\alpha$ when using $f'$
as the objective will give a normalized relative error of $\alpha$ when using
$f$ as the objective
(i.e. $f(S) \geq \alpha {\rm OPT} + (1-\alpha) {\rm WORST}$).
\oprocend
\end{remark}

\begin{corollary}
The problem $\max_{S \in {\cal F}} f(S)$
can be approximated to $(p+1)^{-1}OPT + (1+\epsilon)WORST$
(where $\epsilon \in [-\frac{1}{2},1)$ is a function of $p$)
using a greedy algorithm.
\end{corollary}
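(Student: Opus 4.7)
The plan is to combine the greedy approximation bound from Lemma~\ref{lem:greedysubmodpsystem} with the conversion delivered by Theorem~\ref{thm:approxBoundwCost}, and then read off $\epsilon$ from the resulting algebra. Since $f_1$ is monotone non-negative submodular and $M$ upper-bounds every $f_2(e)$, the auxiliary objective $f'(S) = f(S) + |S|M$ is monotone non-decreasing, non-negative, and submodular (already noted in the discussion around~\eqref{eqn:combObj2}). Applying Lemma~\ref{lem:greedysubmodpsystem} to $f'$ over the $p$-system $(E,\mathcal{F})$, the greedy algorithm returns a set $S$ with $f'(S) \geq \tfrac{1}{p+1}\,f'(T)$ for $T \in \argmax_{Z \in \mathcal{F}} f'(Z)$; i.e.\ an $\alpha$-approximation of $f'$ in the sense of Theorem~\ref{thm:approxBoundwCost} with $\alpha = \tfrac{1}{p+1}$.

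Next, I would substitute $\alpha = \tfrac{1}{p+1}$ into Theorem~\ref{thm:approxBoundwCost} and simplify the coefficient of $Mn$:
\[
1 + \alpha - \tfrac{2\alpha}{p} \;=\; 1 + \tfrac{1}{p+1}\left(1 - \tfrac{2}{p}\right) \;=\; 1 + \tfrac{p-2}{p(p+1)}.
\]
Setting $\epsilon := \tfrac{p-2}{p(p+1)}$, the theorem therefore yields $f(S) \geq \tfrac{1}{p+1}\,OPT - (1+\epsilon)\,Mn$. Identifying $WORST = -Mn$ as the worst-case lower bound on $f$ over bases (valid because $f_1 \geq 0$ and $f_2(Z) \leq |Z|M \leq nM$ for every basis $Z$; this matches the convention used in the matroid remark following Theorem~\ref{thm:approxBoundwCost}), this rewrites as
\[
f(S) \;\geq\; \tfrac{1}{p+1}\,OPT \;+\; (1+\epsilon)\,WORST,
\]
which is the advertised form.

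Finally, I would verify that $\epsilon \in [-\tfrac{1}{2},1)$. Direct substitution gives $\epsilon(1) = -\tfrac{1}{2}$ and $\epsilon(2) = 0$, and for every $p \geq 1$ we have $\tfrac{p-2}{p(p+1)} < \tfrac{1}{p+1} \leq \tfrac{1}{2}$, so $\epsilon < 1$ with plenty of room to spare (the maximum is in fact near $p \approx 2+\sqrt{6}$ and stays below $0.11$). The only non-routine step is the identification of the additive $-Mn$ slack produced by Theorem~\ref{thm:approxBoundwCost} with the scalar multiple $(1+\epsilon)\,WORST$ — once one adopts the convention $WORST = -Mn$ established in the remark after that theorem, everything else is direct substitution and algebraic simplification.
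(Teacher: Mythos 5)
Your proposal follows exactly the paper's own route: apply the $\tfrac{1}{p+1}$ greedy guarantee for monotone non-negative submodular maximization over a $p$-system to the shifted objective $f'$, then feed $\alpha=\tfrac{1}{p+1}$ into Theorem~\ref{thm:approxBoundwCost} to obtain $f(T_G)\geq \tfrac{1}{p+1}f(Z)-Mn\bigl(1+\tfrac{p-2}{p(p+1)}\bigr)$. You actually go slightly further than the paper by explicitly verifying $\epsilon=\tfrac{p-2}{p(p+1)}\in[-\tfrac12,1)$ and by spelling out the identification $WORST=-Mn$, both of which the paper leaves implicit.
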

\begin{proof}
Let $f'(S) = f(S) + |S|M, M = \max_{e \in E}f_2(e)$ and run the greedy algorithm
to obtain the set $T_G$.
Let Let $T$ and $Z$ be defined as above.
Since $f'$ is a non-negative monotone function,
\[ f'(T_G) \geq \frac{1}{p+1}f'(T). \]
So applying Theorem~\ref{thm:approxBoundwCost},
\begin{align*}
f(T_G) &\geq \frac{1}{p+1}f(Z) - Mn \left( 1 + \frac{p-2}{p(p+1)} \right)
\end{align*}
\end{proof}

\subsection{Performance Bounds with Costs}
Using the proposed modification, new bounds can be derived for the algorithms
discussed in this paper.
One thing to note is that in the case of the tour, all tours will be the same
length that is $|V|$, even though the tour is not a 1-system.
Therefore, we can apply Lemma~\ref{lem:newFuncOrder} directly.

\begin{theorem}[Greedy tour with edge costs]
Using \eqref{eqn:combObj1} as the objective, Algorithm~\ref{alg:greedyTour}
outputs a tour that has a value at least
$ \frac{1}{3}OPT - \frac{2}{3}\beta M|V| $
where $M$ is the maximum cost of any edge.
\end{theorem}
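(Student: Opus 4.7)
The plan is to reduce the problem to running greedy on the monotone, non-negative, submodular ``inverted-cost'' objective $f'(S,\beta) = (1-\beta)w(S) + \beta(|S|M - c(S)) = f(S,\beta) + \beta|S|M$, and then translate the approximation guarantee back to $f$ using the fact that every tour has exactly $|V|$ edges.

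First, I would verify that $f'$ is monotone, non-negative, and submodular. The first term is monotone submodular by hypothesis on $w$; the second term is a sum over edges of $\beta(M - c(e)) \geq 0$, so it is a non-negative modular function. Hence $f'$ satisfies the hypotheses needed to apply greedy $p$-system guarantees. Next, invoke Lemma~\ref{lem:psysTSP}, which says the undirected TSP independence system is a $p$-system with $p < 2$. By Lemma~\ref{lem:greedysubmodpsystem}, maximizing $f'$ greedily over this system yields a set $S_{G'}$ with $f'(S_{G'}) \geq \frac{1}{p+1}\,f'(T^{*}_{f'}) \geq \frac{1}{3}\,f'(T^{*}_{f'})$, where $T^{*}_{f'}$ denotes an optimal tour with respect to $f'$.

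Next I would argue that the greedy output for $f$ and for $f'$ coincide. This is precisely Theorem~\ref{thm:combObjResult}: at every greedy step the two functions differ by a term depending only on the cardinality of the current set plus a constant increment per edge, so each iteration picks the same marginal-best edge. Thus Algorithm~\ref{alg:greedyTour} applied to $f$ returns $S_{G'}$. Moreover, since every tour has exactly $|V|$ edges, Remark~\ref{rem:newFuncOrder} gives $T^{*}_{f'} = T^{*}_{f} =: T^{*}$, the optimal tour under $f$ as well.

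To finish, I would apply Lemma~\ref{lem:newFuncOrder} with $\alpha = \tfrac13$, but tracking the fact that in our setting the offset between $f$ and $f'$ is $\beta M|S|$ rather than $M|S|$ (the proof of the lemma goes through unchanged with $M$ replaced by $\beta M$). Plugging in $|S_{G'}| = |T^{*}| = |V|$ yields
\[
f(S_{G'}) \;\geq\; \tfrac13 f(T^{*}) + \beta M\bigl(\tfrac13 |V| - |V|\bigr) \;=\; \tfrac13\,\mathrm{OPT} - \tfrac23\beta M|V|,
\]
which is exactly the claimed bound. The only real subtlety, and hence the main thing to be careful about, is the offset-tracking: Lemma~\ref{lem:newFuncOrder} as stated in the paper writes the offset as $M|S|$, and one needs to observe that the argument is agnostic to the scalar in front, so the true offset $\beta M|S|$ propagates linearly into the error term.
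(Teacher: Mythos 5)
Your proof is correct and follows exactly the route the paper sets up for this theorem (which it states without an explicit proof): pass to the monotone non-negative objective $f'$, use the $\tfrac{1}{p+1}>\tfrac13$ greedy bound for the TSP $p$-system, note via Theorem~\ref{thm:combObjResult} and Remark~\ref{rem:newFuncOrder} that greedy and the optimum coincide for $f$ and $f'$ since all tours have $|V|$ edges, and translate back with Lemma~\ref{lem:newFuncOrder} using the offset $\beta M$. Your observation that the lemma's offset must be scaled by $\beta$ in this application is the right care point and is handled correctly.
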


\begin{theorem}[Tour via 2-matching with edge costs]
Using \eqref{eqn:combObj1} as the objective, the tour based on a matching algorithm
outputs a tour that has a value at least
$ \frac{2}{9}OPT - \frac{7}{9}\beta M|V| $
where $M$ is the maximum cost of any edge.
\end{theorem}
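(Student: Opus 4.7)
The approach mirrors the proof sketched for the greedy tour with edge costs: reduce to the non-negative, monotone, submodular surrogate $f'$ defined in~\eqref{eqn:combObj2}, invoke the known approximation ratio of the matching-based tour on that surrogate, and translate back to $f$ using Lemma~\ref{lem:newFuncOrder}. Concretely, I would run the 2-matching-based tour algorithm of Section~\ref{sec:matching} with objective $f'(S,\beta) = f(S,\beta) + \beta |S| M$. Because $f'$ is monotone, non-negative, and submodular, Theorem~\ref{thm:matchingBoundRatio} applies directly to it.

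Next, I would pick the worst-case branch of the $\max$ in Theorem~\ref{thm:matchingBoundRatio}: with only $\kappa \le 1$ assumed, the guaranteed approximation factor on tours is $\alpha = \tfrac{2}{3(2+1)} = \tfrac{2}{9}$. Let $T_{out}$ be the algorithm's output and $T^\star$ the optimal tour for $f$ (equivalently for $f'$, by Remark~\ref{rem:newFuncOrder}, since all tours have exactly $|V|$ edges). Then $f'(T_{out}) \ge \tfrac{2}{9}\,f'(T^\star)$.

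Finally, I would apply Lemma~\ref{lem:newFuncOrder} with $S_1 = T_{out}$ and $S_2 = T^\star$, noting that $|T_{out}| = |T^\star| = |V|$. The modular term that has been added to pass from $f$ to $f'$ has per-element maximum $\beta M$ (not $M$), so the correction term becomes $\beta M\bigl(\alpha |T^\star| - |T_{out}|\bigr) = \beta M |V|(\alpha - 1) = -\tfrac{7}{9}\beta M|V|$. This yields $f(T_{out}) \ge \tfrac{2}{9}\,f(T^\star) - \tfrac{7}{9}\,\beta M |V| = \tfrac{2}{9}\,OPT - \tfrac{7}{9}\,\beta M|V|$, as claimed.

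The main obstacle is really just bookkeeping: one must be careful that the constant $M$ appearing in Lemma~\ref{lem:newFuncOrder} refers to the maximum per-element value of the modular term used to form the surrogate, which here is $\beta M$ rather than $M$, so that the additive loss carries the factor $\beta$ and becomes $\tfrac{7}{9}\beta M|V|$. The equal-size observation (all tours contain $|V|$ edges) is what makes the conversion between $f$ and $f'$ clean and lets us avoid the more elaborate $p$-system argument of Theorem~\ref{thm:approxBoundwCost}.
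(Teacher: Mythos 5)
Your proposal is correct and follows exactly the route the paper intends: the paper states this theorem without an explicit proof, but the preceding discussion ("all tours have length $|V|$, so we can apply Lemma~\ref{lem:newFuncOrder} directly") is precisely your argument of passing to the surrogate $f'$, applying Theorem~\ref{thm:matchingBoundRatio} with the worst-case factor $\alpha=\tfrac{2}{9}$ at $\kappa=1$, and converting back with per-element constant $\beta M$. Your bookkeeping remark about the $\beta$ factor in the modular offset is also the right reading of the paper's (slightly loosely stated) Lemma~\ref{lem:newFuncOrder}.
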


The bounds given in these theorems are not the best that can be obtained.
For the case when $\beta$ is small, the submodular reward is weighted higher and the bound
is closer to that of maximizing a submodular function.
On the other hand, for the case of large $\beta$ (specifically when $\beta = 1$), the problem
is just the traditional minimum cost TSP. In \cite{TAJ:79} an approximation of
$\frac{1}{2}(OPT+WORST)$ was given for finding a minimum TSP using a greedy approach which
is better than the $\frac{1}{3}OPT+\frac{2}{3}WORST$ that we calculate for the greedy tour with costs.
In addition, simple fast methods also exist to find the minimum cost tour in a graph using
other approaches.
Therefore, if the costs are to be given a higher weight, the analysis given here is not very
informative of the resulting solution.

\section{Simulations}
\label{sec:sims}

\begin{figure}[tb]
\centering
\subfloat[Example graph.]{
	\includegraphics[width=0.40\linewidth]{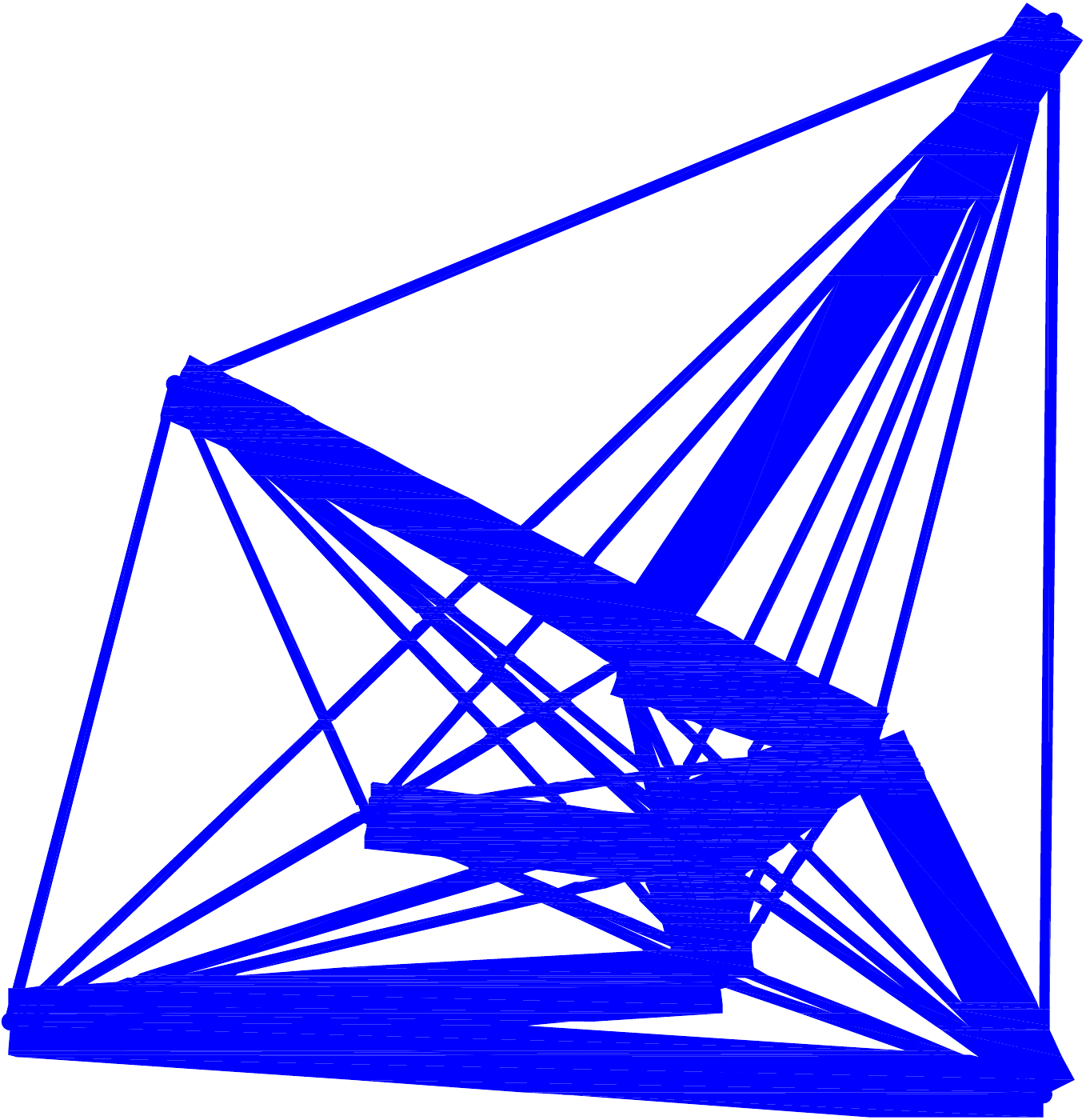}
	\label{fig:egGraph}
}
\hfill
\subfloat[Greedy solution.]{
    \includegraphics[width=0.40\linewidth]{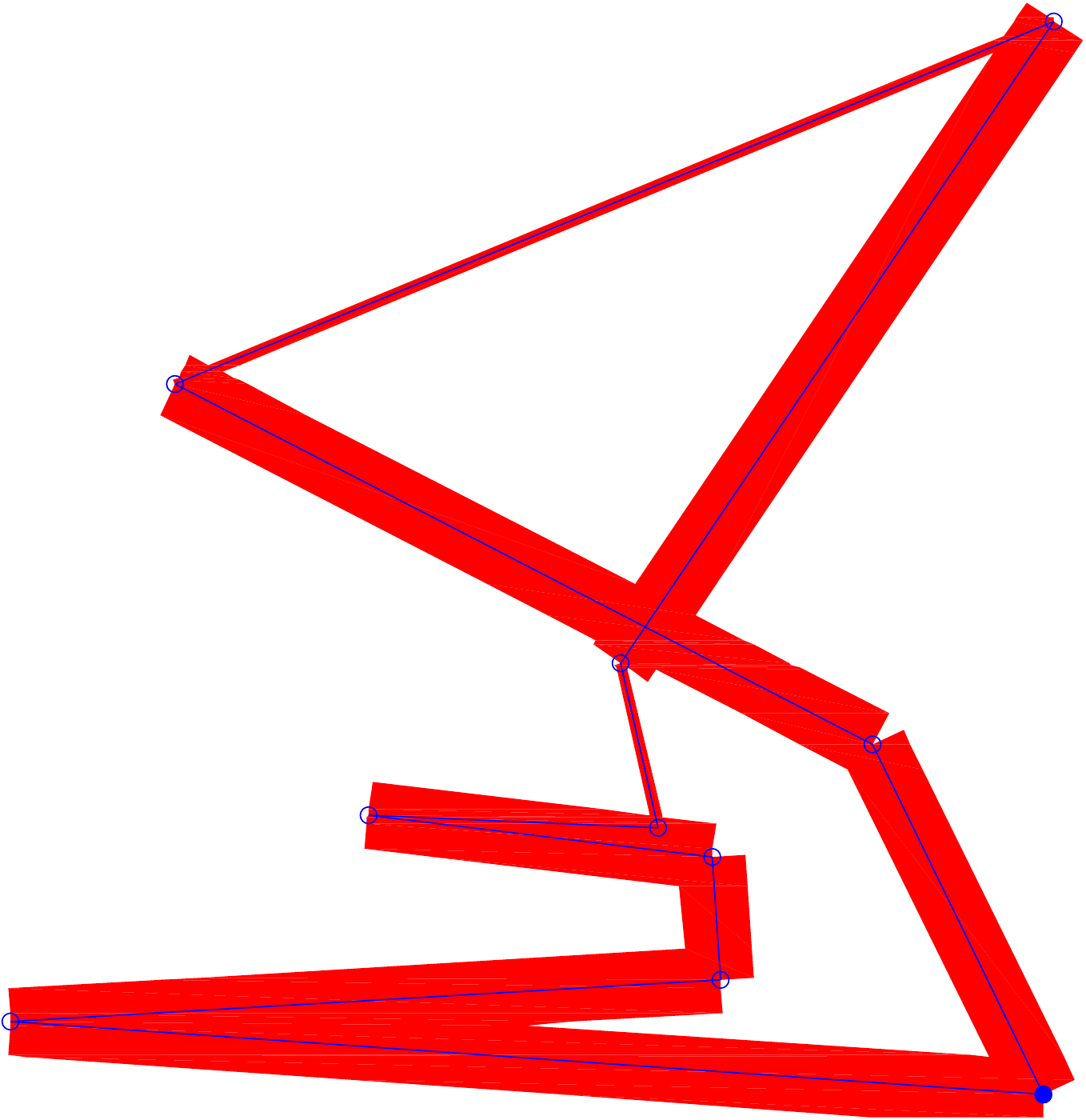}
    \label{fig:egGraphGreedy}
}
\caption{A ten vertex graph and example solution.}
\label{fig:egGraph2}
\end{figure}

In order to empirically compare our algorithms, we have run simulations for a function that 
represents coverage of an environment.  A complete graph is generated by uniformly placing 
vertices over a rectangular region.  Each edge in the graph is associated with a rectangle 
and each rectangle is assigned a width to represent different amounts of coverage.

An example of a ten vertex graph is given in Figure~\ref{fig:egGraph}.
Here we see the complete graph as well as a representation of the value of each
edge given by the area of the rectangle the edge corresponds to.
The majority of edges have a low weight with a few having a much larger value.
Running the greedy tour algorithm, we get the tour given in
Figure~\ref{fig:egGraphGreedy}.

The simulations were performed on a quad-core machine with a 3.10 GHz CPU and 6GB RAM.
To decrease total runtime, three instances of problems were run in parallel on
different Matlab\textsuperscript{\textregistered} sessions.

\subsection{Algorithm Comparison}
Next we compare the performance of the algorithms given in this paper.
Each algorithm was run on randomly generated graphs for a fixed number of
vertices. The resulting value of the objective function was recorded and
averaged over all instances.
The algorithms compared are:
\begin{itemize}
\item GreedyTour (GT): The greedy algorithm for constructing a tour.
\item RandomTour (RT): Edges are considered in a random order. An edge is selected to be part of the tour
as long as the degree constraints will be satisfied and no subtour will be created.
\item For the 2-matching based algorithm, three possibilities are considered.
All three start off by greedily constructing a 2-matching.
	\begin{itemize}
	\item GreedyMatching (GM): Remove from each subtour the element that will
	result in the least loss to the total value.
	Greedily connect up the complete tour.
	\item GreedyMatching2 (GM2): Use Algorithm~\ref{alg:reduceMatching} to reduce
	the matching. Greedily connect up the complete tour.
	\item GreedyMatching3 (GM3): Use Algorithm~\ref{alg:reduceMatching} to reduce
	the matching. Arbitrarily connect up the complete tour.
	\end{itemize}
\end{itemize}

The vertices were distributed randomly over a 100x100 region.
For the first simulation, the edge
thickness was assigned a value of 7 with probability $\frac{2}{\sqrt{|V|}}$,
or 1 otherwise.
This way, $O(|V|)$ of the edges had a high reward. A total of 30 different
instances of the problem were solved by all the algorithms for 5 different
graph sizes.
For the second simulation, the set up was the same except the edges thickness
were distributed uniformly over $[0,7]$ and a total of 40 instances were averaged.

\begin{figure}
\centering
\includegraphics[scale=0.4]{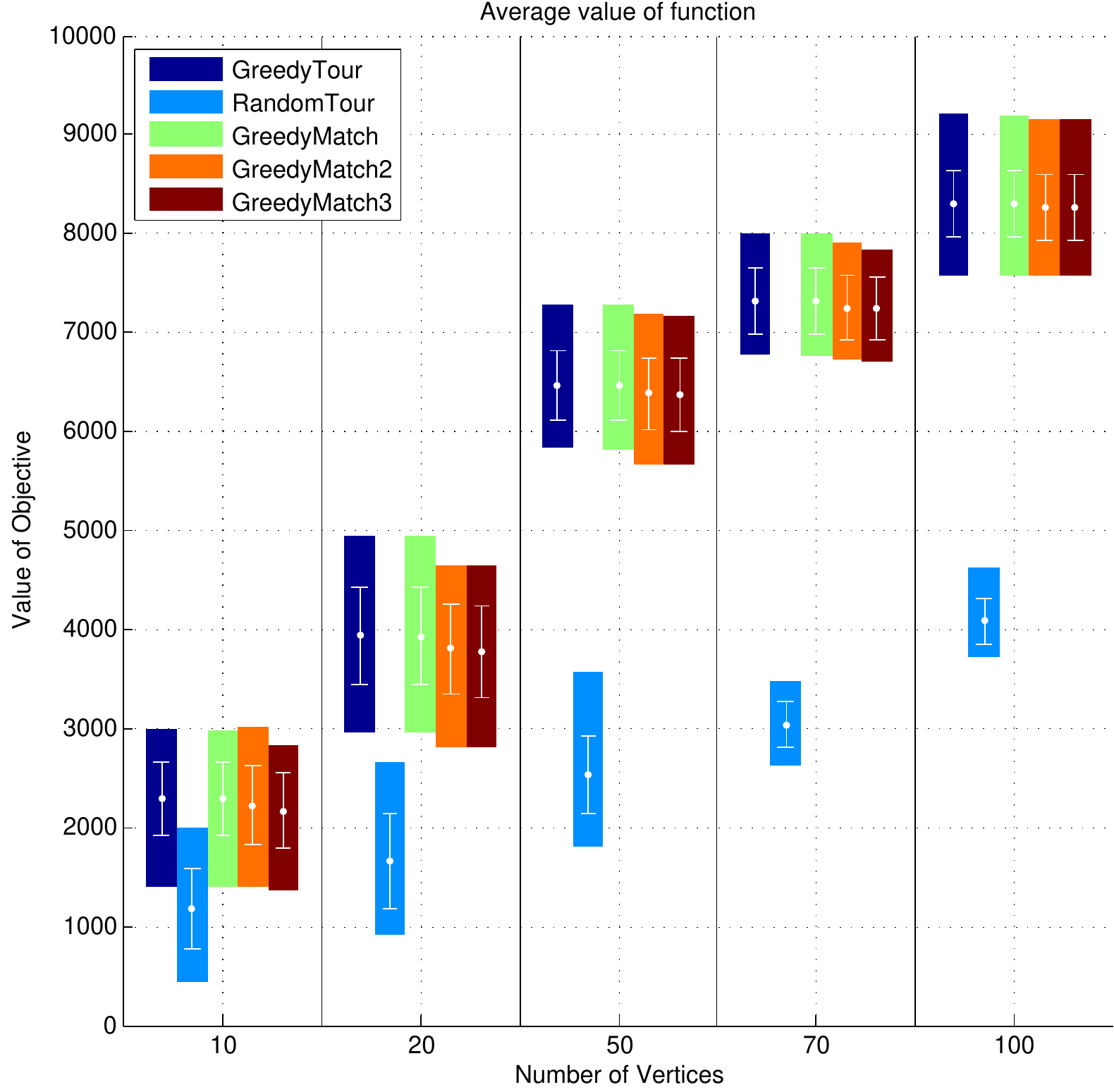}\\[0.5em]
{\footnotesize
\begin{tabular}{c||c|c|c|c|c}
 & GT & RT & GM & GM2 & GM3\\ \hline \hline
10 & 27 (3) & 0 (0) & 25 (1) & 16 (1) & 2 (0)\\ \hline
20 & 23 (8) & 0 (0) & 21 (5) & 8 (0) & 1 (0)\\ \hline
50 & 26 (16) & 0 (0) & 14 (4) & 5 (0) & 0 (0)\\ \hline
70 & 27 (18) & 0 (0) & 12 (3) & 3 (0) & 1 (0)\\ \hline
100 & 27 (16) & 0 (0) & 14 (3) & 2 (0) & 1 (0)\\
\end{tabular}
}
\caption{(Top) The bars give the range of results.
The white markers inside the bars show the mean and standard deviation.
(Bottom) Number of wins for each algorithm.
Wins include ties (unique wins specified in parens).}
\label{fig:simVals}
\end{figure}

\begin{figure}
\centering
\includegraphics[scale=0.4]{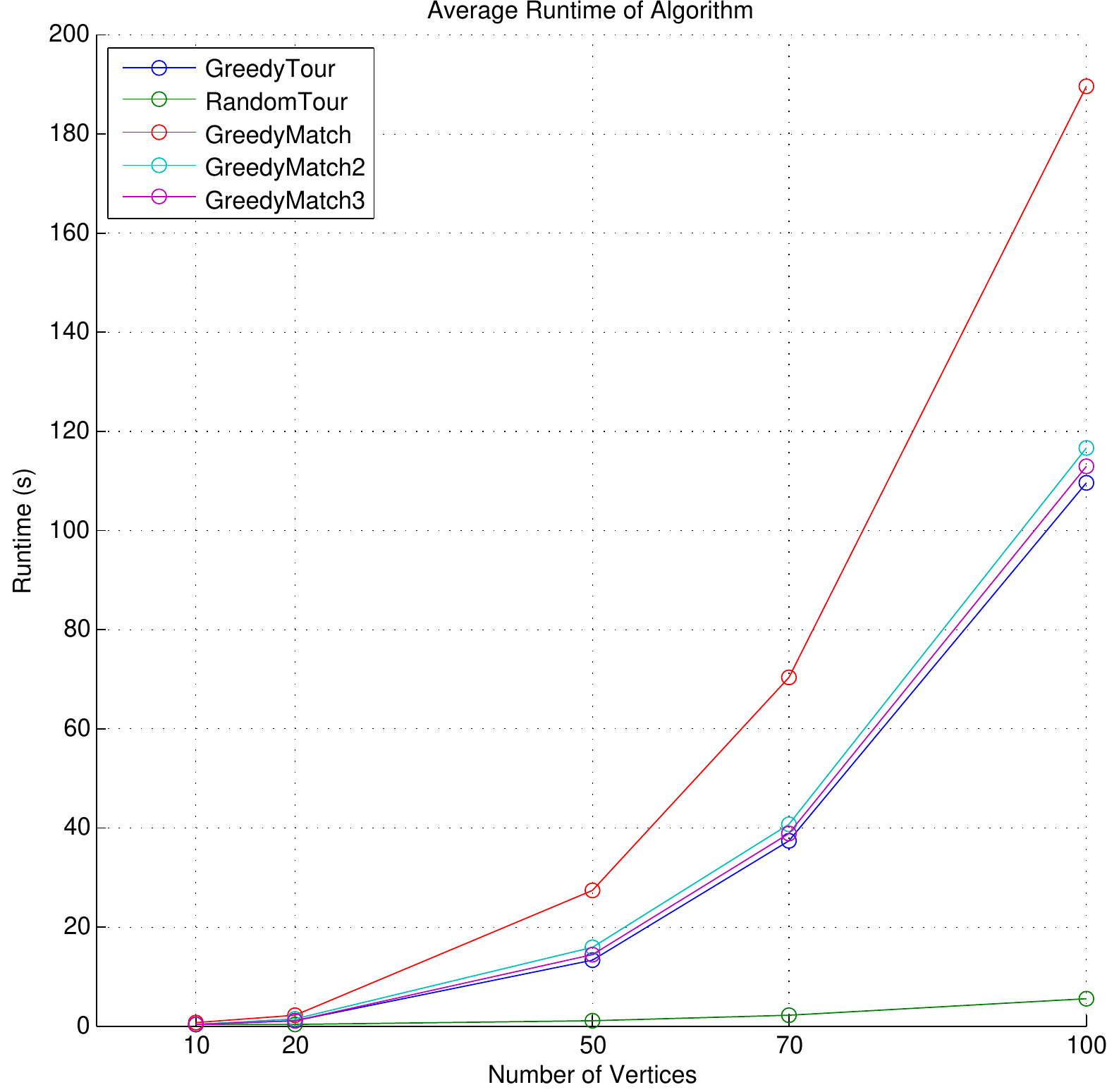}\\[0.5em]
{\footnotesize
\begin{tabular}{{c||c|c|c|c|c}}
 & GT & RT & GM & GM2 & GM3\\ \hline \hline
10 & 0.2 & 0 & 0.4 & 0.3 & 0.2\\ \hline
20 & 0.8 & 0.1 & 2.2 & 1.4 & 1.1\\ \hline
50 & 13.3 & 0.8 & 27.1 & 15.9 & 14.4\\ \hline
70 & 37.4 & 2.2 & 70.4 & 40.7 & 39.0\\ \hline
100 & 109.7 & 5.4 & 189.9 & 116.5 & 113.1\\
\end{tabular}
}
\caption{Average runtime (seconds) of algorithms.}
\label{fig:simTimes}
\end{figure}

Average runtimes of each algorithm are shown in Figure~\ref{fig:simTimes}.

The results for the first setup are shown in Figure~\ref{fig:simVals}
The table gives a count of the number of times each algorithm had the largest value.
Average runtimes of each algorithm are shown in Figure~\ref{fig:simTimes}.
For the second setup, the solution values and number of wins are given in
Figure~\ref{fig:simVals18}.

The RT algorithm performs poorly in each case. This is expected as
no effort is put into finding good edges. For the first setup,
most of the edges have a low reward and so the rewards of any random set of edges
will be biased towards a small value.
In the second setup, since the distribution is uniform, the expected value of the
tour increases though getting close to the ``best" tour is still not likely.

Both GT and GM perform similarly well on average (note that the number of ties
is high  especially for smaller graph sizes) though GM takes a lot longer to run.
This can be explained due to the extra oracle calls required to
determine which edge to remove from each subtour
(a total of $|V|$ extra oracle calls).

Both GM2 and GM3 are slightly behind GM in terms of the final value.
This makes sense
since in GM more effort is put into finding a good reduction of the 2-matching 
whereas in GM2 only an attempt to find a good set of edges to remove is made.
Between GM2 and GM3, the solution values are very close though GM2 runs slightly 
slower than GM3. The only difference between the two techniques is that the
joining of the reduced subtours is performed randomly for GM3. This requires no
oracle calls leading to a faster runtime. In this particular set up, the number
of subtours was $o(|V|)$ so very few calculations were needed to construct the final
tour from the reduced subtours. It is however possible for the number of subtours
to be $O(|V|)$ and in those cases GM2 would be much slower as the problem size would
not be significantly reduced by first coming up with a 2-matching.

\begin{figure}
\centering
\includegraphics[scale=0.4]{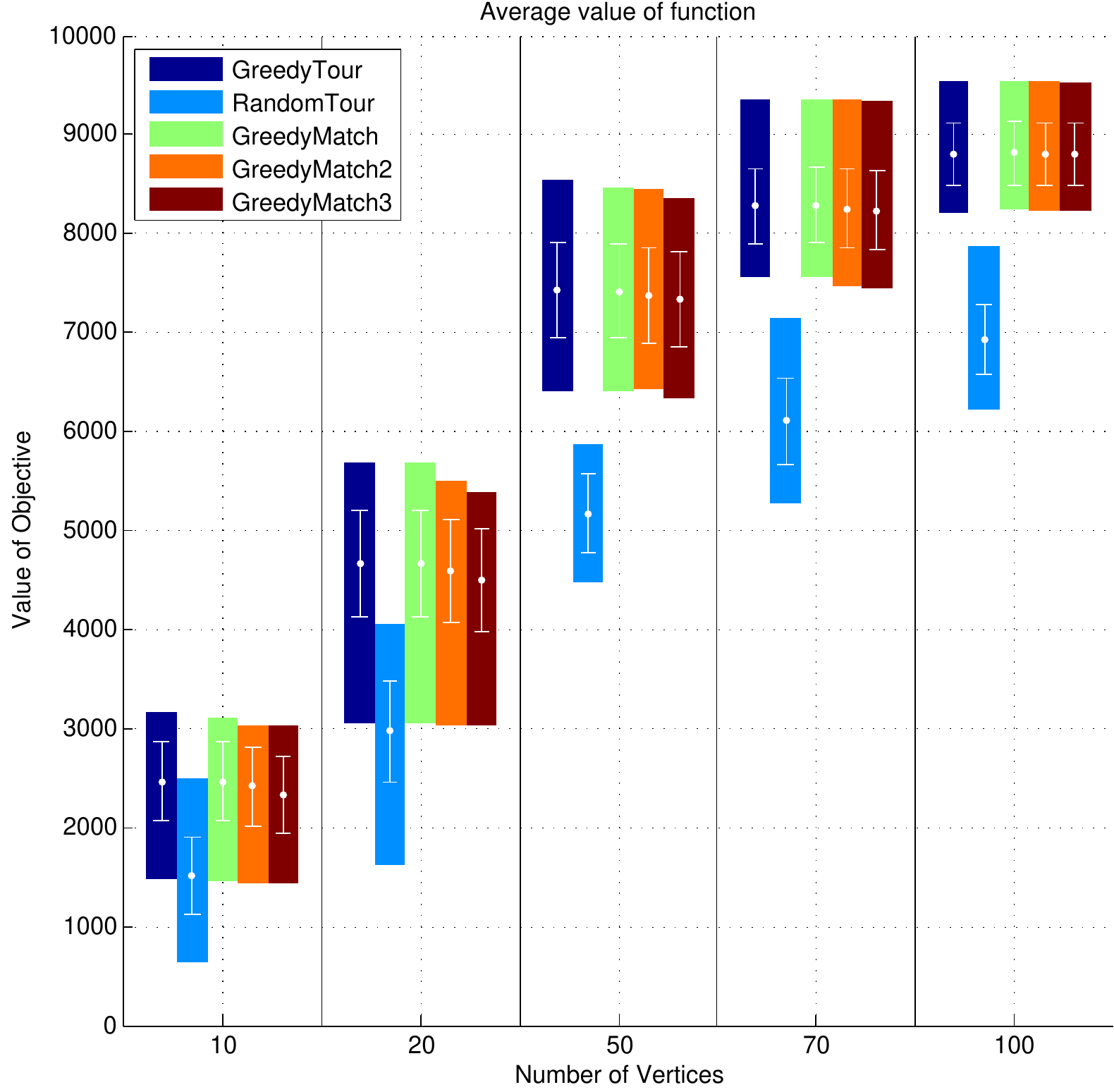}\\[0.5em]
{\footnotesize
\begin{tabular}{c||c|c|c|c|c}
 & GT & RT & GM & GM2 & GM3\\ \hline \hline
10 & 35 (10) & 0 (0) & 28 (3) & 22 (0) & 4 (0) \\ \hline
20 & 31 (8) & 0 (0) & 32 (9) & 11 (0) & 0 (0)\\ \hline
50 & 33 (13) & 0 (0) & 25 (5) & 12 (2) & 1 (0)\\ \hline
70 & 29 (10) & 0 (0) & 29 (10) & 8 (1) & 4 (0)\\ \hline
100 & 35 (5) & 0 (0) & 35 (5) & 17 (0) & 12 (0)\\
\end{tabular}
}
\caption{(Top) The bars give the range of results.
The white markers inside the bars show the mean and standard deviation.
(Bottom) Number of wins for each algorithm.
Wins include ties (unique wins specified in parens).}
\label{fig:simVals18}
\end{figure}

\subsection{Dependence on Curvature}
To illustrate how the results of the 2-matching based algorithm
changes with curvature, the values of the greedy matching and the linear
approximation are now compared.
The submodular function is modified to be
\[
f_{new}(S) = f(S) + \sum_{e \in S} length(e).
\]
Since $f(S)$ is the total area, its value depends on the thickness of the edges.
For a small thickness, the overlaps in the area between different edges
will be small and so the function will be more linear.
Therefore, there is a positive correlation between the edge thickness and the
curvature of the function.

The experiment is performed on a ten vertex graph.
The two 2-matching approximations are compared and the
results are shown in Figure~\ref{fig:curvCompareVals}.
For a second test, the GreedyTour and the 2-matching algorithm are compared and the
average for 20 different ten vertex graphs is shown in Figure~\ref{fig:curvCompareVals2}.
In the figure, the ``LGmatching" algorithm creates a greedy 2-matching as well as 
the linear approximation and takes the best of the two. The best edge from each subtour is
removed and the tour is then constructed greedily.
The ``Lmatching" algorithm runs only the linear approximation to find the 2-matching.
Figure~\ref{fig:curvCompareCurv} shows how the curvature changes with edge thickness.

\begin{figure}
\centering
\includegraphics[width=0.85\linewidth]{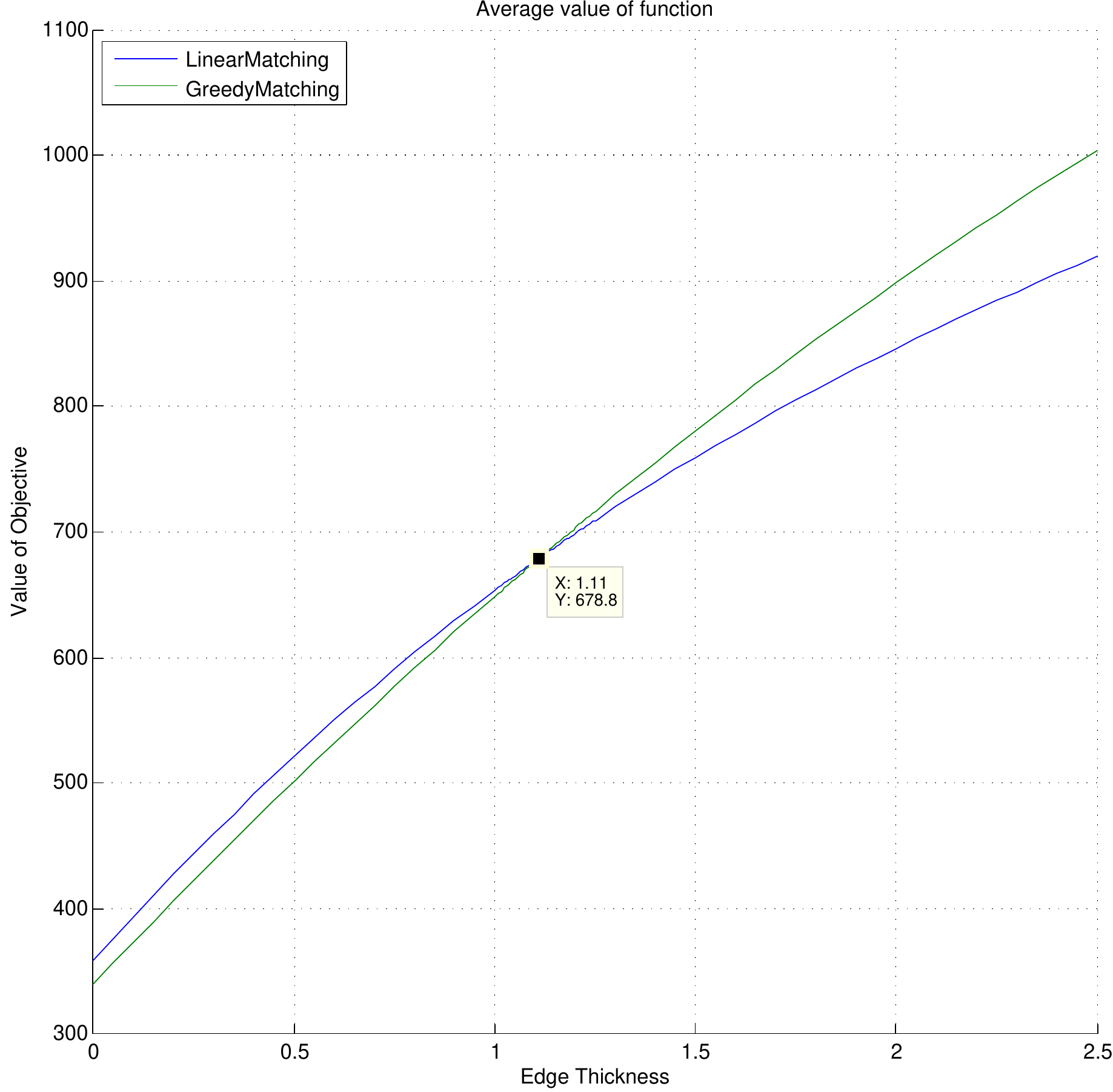}
\caption{Curvature comparison of 2-matching algorithms}
\label{fig:curvCompareVals}
\end{figure}

\begin{figure}
\centering
\includegraphics[width=0.85\linewidth]{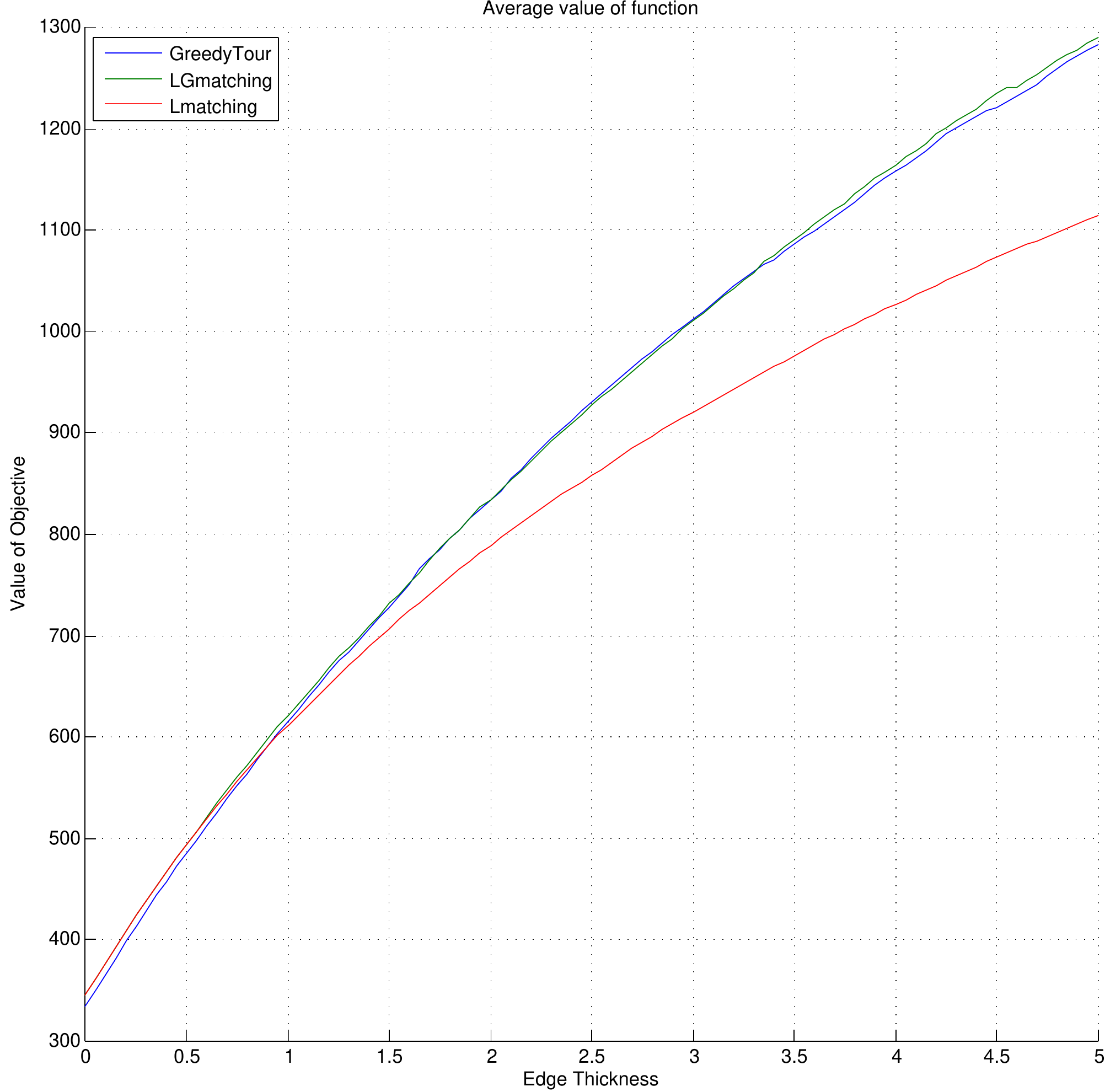}
\caption{Curvature comparison of final tour}
\label{fig:curvCompareVals2}
\end{figure}

\begin{figure}
\centering
\includegraphics[width=0.85\linewidth]{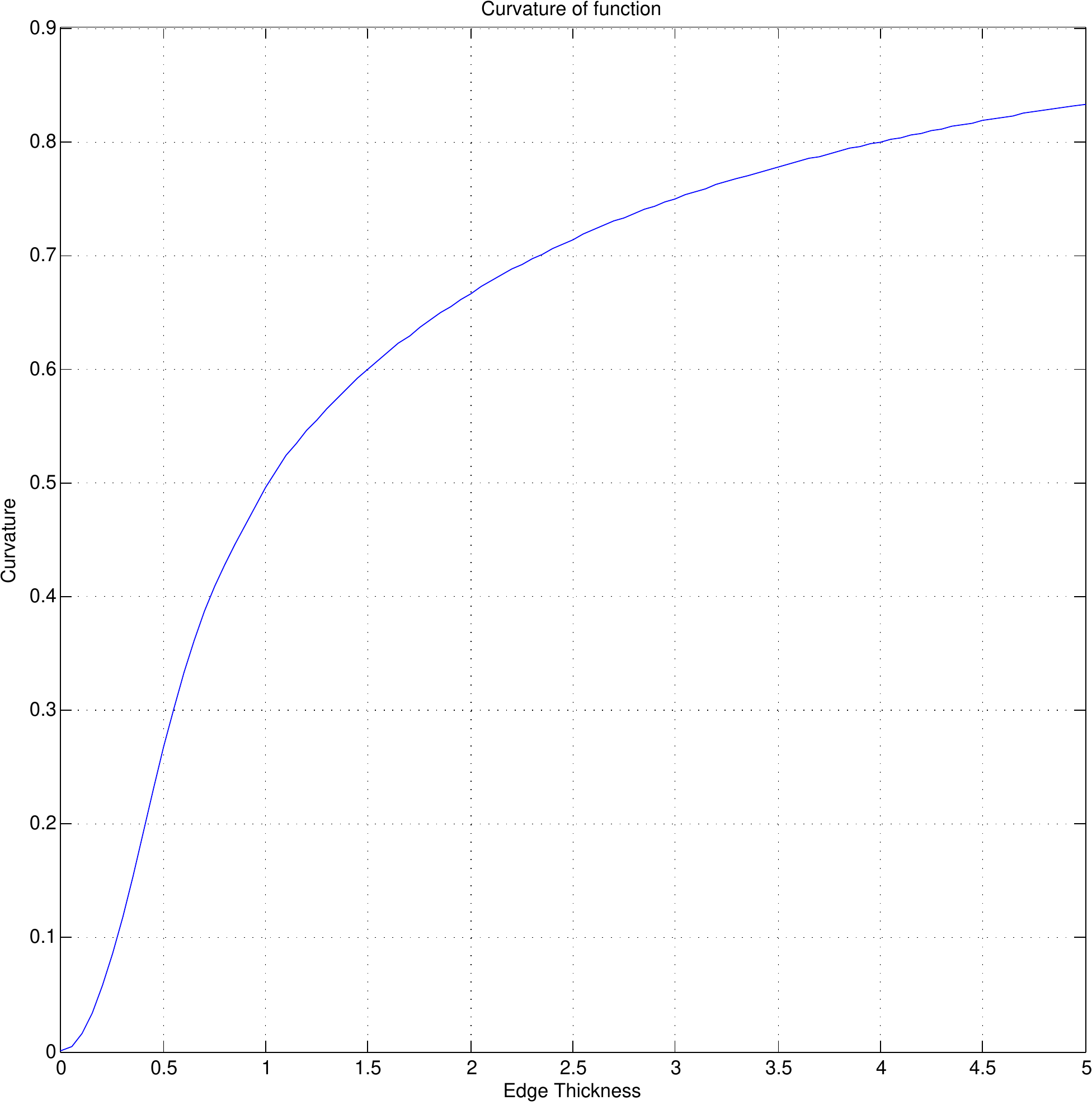}
\caption{Curvature as a function of edge thickness}
\label{fig:curvCompareCurv}
\end{figure}

From Figure~\ref{fig:curvCompareVals}, we can see that for the case where the function
is completely linear
(thickness is 0), the linear approximation does better (since it is
actually finding the optimal).
The greedy matching starts to perform better at a curvature of around 0.53.
Looking at the results for the value of the actual tour (Figure~\ref{fig:curvCompareVals2}),
we can see that at low values of curvature the linear approximation is being used
to create the tour. Eventually, greedily constructing the 2-matching becomes more
rewarding and so the linear approximation is disregarded.
Generally, over all the values of curvature tested, the matching algorithm
performs close to or better that the greedy tour algorithm.

\section{Conclusions and Future Directions}
\label{sec:conc}
In this paper, we extended the max-TSP problem to submodular rewards.
We presented two algorithms; a greedy algorithm which achieves a
$\frac{1}{2+\kappa}$ approximation, and a matching-based algorithm, which
achieves a $\max\{\frac{2}{3(2+\kappa)},\frac{2}{3}(1-\kappa)\}$ approximation (where
$\kappa$ is the curvature of the function).  Both
algorithms have a complexity of $O(|V|^3)$ in terms of number of
oracle calls.  We extended these results to directed graphs and
presented simulation results to empirically compare their performance
as well as evaluating the dependence on curvature.

There are several directions for future work.  First, we would like to
determine the tightness of the bounds that were presented.  The class
of submodular functions is very broad and so adding further
restrictions may help give a better idea of how the bounds change for
specific situations.
Another direction of research would be
considering extending other algorithms.  The strategies presented in
this paper are extensions of simple algorithms that are used to obtain
approximations for the traditional TSP. There are many other simple
strategies that could also be extended such as best neighbour or
insertion heuristics.  One other possible extension would be to
consider the case where multiple tours are needed (such as with
multiple patrolling robots).


\bibliographystyle{IEEEtran}
\bibliography{alias,Main,New}
\end{document}